
\documentclass[12pt,a4paper]{amsart}
\usepackage{amssymb}
\usepackage{color,graphicx}



\newtheorem{thm}{Theorem}[section]
\newtheorem{cor}[thm]{Corollary}
\newtheorem{lem}[thm]{Lemma}
\newtheorem{ex}[thm]{Example}

\theoremstyle{definition}

\newtheorem{qu}[thm]{Question}
\theoremstyle{remark}

\pagestyle{plain}

\textwidth 15.5cm

\textheight 20cm

\topmargin 2cm

\hoffset-1.5cm

\baselineskip 16 true pt

\begin{document}

\title{The Order  Supergraph of the Power Graph of a Finite Group}

\author{ \bf  A. R. Ashrafi$^\ast$ and A. Hamzeh}

\thanks{$^\ast$Corresponding author (Email: ashrafi@kashanu.ac.ir)}

\address{Department of Pure Mathematics, Faculty of Mathematical
Sciences, University of Kashan, Kashan 87317-53153, I. R. Iran}

\dedicatory{}

\begin{abstract}
The power graph $\mathcal{P}(G)$ is a graph with group elements
as vertex set and two elements are adjacent if one is a power of
the other. The order supergraph $\mathcal{S}(G)$ of the power
graph $\mathcal{P}(G)$ is a graph with vertex set $G$  in which
two elements $x, y \in G$ are joined  if $o(x) | o(y)$ or $o(y) |
o(x)$. The purpose of this paper is to study certain properties
of this new graph together with the relationship between
$\mathcal{P}(G)$ and $\mathcal{S}(G)$.

\vskip 3mm

\noindent{\bf Keywords:} Power graph, order supergraph, proper
order supergraph.

\vskip 3mm

\noindent{\bf AMS Subject Classification Number:} $05C25$, $05C50$.
\end{abstract}

\maketitle

\section{Introduction}
All groups in this paper are finite and we will consider only
simple undirected graphs. Suppose $\Gamma$ and $\Delta$ are
graphs in which $V(\Gamma) \subseteq V(\Delta)$ and $E(\Gamma)
\subseteq E(\Delta)$. Then $\Gamma$ is called a \textit{subgraph}
of $\Delta$ and $\Delta$ a \textit{supergraph} for $\Gamma$.
Suppose $x, y$ are vertices of $\Gamma$. The length of a minimal
path connecting $x$ and $y$ is called \textit{topological
distance} between $x$ and $y$.  The maximum topological distances
between vertices of $\Gamma$ is called its \textit{diameter}. The
topological distance between $x$ and $y$ and diameter of $\Gamma$
are denoted by $d_{\Gamma}(x,y)$ and $diam({\Gamma})$,
respectively. The number of edges incident to a vertex $x$ is
called the {\textit degree} of $x$ denoted by $deg(x)$.

There are several kinds of simple undirected graphs associated
with finite groups which are currently of interest in the field.
For a finite group $G$; there are two graphs associated to the
set of elements of $G$, the \textit{power graph $\mathcal{P}(G)$}
and its \textit{order supergraph} which is denoted by
$\mathcal{S}(G)$. This graph is also recorded in literature as
main supergraph \cite{34}. Two elements $x,y \in G$ are adjacent
in the power graph if and only if one is a power of the other.
They are joined to each other in $\mathcal{S}(G)$ if and only if
$o(x) | o(y)$ or $o(y) | o(x)$. The aim of this paper is studying
the following two questions:
\begin{enumerate}
\item Which graph can occur as  $\mathcal{P}(G)$ or
$\mathcal{S}(G)$?

\item What is the structure of $G$ if $\mathcal{P}(G)$ or $\mathcal{S}(G)$ are
given?
\end{enumerate}

Suppose $\Gamma$ is a simple graph with vertex set $V$ and edge
set $E$ and $R$ is a partition of $V$. Define the quotient graph
$\widehat{\Gamma}$ with vertex set $R$. Two vertices $A$ and $B$
in $R$ are adjacent if and only if there exists a vertex in $A$
and another one in $B$ such that they are adjacent in $\Gamma$.
Bubboloni et al. \cite{01} introduced the notion of order graph
of a finite group $G$, $\mathcal{O}(G)$, as a simple graph with
the following vertex and edge sets:
\begin{eqnarray*}
V(\mathcal{O}(G)) &=& \{ o(x) \mid x \in G\},\\
E(\mathcal{O}(G)) &=& \{rs \mid \{ r, s\} \subseteq V(\mathcal{O}(G)) \ \& \ (r \mid s \ or \ s \mid r)\}.
\end{eqnarray*}
It is easy to see that $\mathcal{O}(G)$ is isomorphic to a quotient graph of $\mathcal{S}(G)$ in which the
set $V(\mathcal{S}(G))$ is partitioned into elements with the same order.

The \textit{directed power graph} of groups and semigroups was
introduced by Kelarev and Quinn in their seminal paper \cite{6}.
They defined the directed power graph of a group $G$ to be a
directed graph with vertex set $G$ and all arcs of the form $uv$
where $v$ is a power of $u$. One of the main result of the
mentioned paper gives a very technical description of the
structure of the power graphs of all finite abelian groups. The
same authors  \cite{7} studied the power graph of the
multiplicative subsemigroup of the ring of $n \times n$ matrices
over a skew-field and a subsemigroup of the monoid of row and
column-monomial $n \times n$ matrices over a group with 0. We
refer the interested readers to consult papers \cite{4,5} for
more information about the power graphs of  semigroups.

The undirected power graph of finite groups was introduced by
Chakrabarty et al. \cite{3}. Chakrabarty et al. \cite{3} proved
that the undirected power graph of  a finite group $G$  is
complete if and only if $G$ is a cyclic $p-$group, for  prime
number $p$. Cameron and Ghosh \cite{1} proved that two abelian
groups with isomorphic power graphs must be isomorphic and
conjectured that two finite groups with isomorphic power graphs
have the same number of elements of each order.   This conjecture
responded  affirmatively by Cameron \cite{2}. Mirzargar et al.
\cite{9}, investigated some combinatorial properties of the power
graph of finite groups and in \cite{11}, some properties of the
power graphs of finite simple groups are considered into account.

Suppose $A$ is a simple graph and $\mathcal{G} = \{ \Gamma_a \}_{a \in A}$ is a set of graphs labeled by vertices of $A$. Following Sabidussi  \cite[p. 396]{12s},  the $A-$join of $\mathcal{G}$ is the  graph  $\Delta$  with the following vertex and edge sets:
 \begin{eqnarray*}
V(\Delta) &=& \{(x,y) \ | \  x  \in V(A) \ \& \ y  \in V(\Gamma_x) \},\\
E(\Delta) &=& \{ (x,y)(x^\prime,y^\prime) \ | \ xx^\prime \in E(A) \
or \  else \  x  =  x^\prime \ \& \ yy^\prime \in E(\Gamma_x)\}.
\end{eqnarray*}
It is  easy to see that this graph can be constructed from $A$
by  replacing  each vertex  $a \in V(A)$ by  the graph $\Gamma_a$
and inserting  either all  or  none  of  the possible edges
between  vertices  of $\Gamma_a$ and $\Gamma_b$ depending on
whether or not  $a$  and  $b$ are joined  by an edge  in $A$. If
$ A $ is an $p-$vertex labeled graph then the $A-$join of
$\Delta_1, \Delta_2, \ldots, \Delta_p$ is denoted by $
A[\Delta_1,\Delta_2, \ldots, \Delta_p]$.

Suppose $G$ is a finite group. The \textit{exponent} of  $G$,
$Exp(G)$, is defined to be the least common multiple of its
element orders. This group is called \textit{full exponent} if
there is an element $x \in G$ such that $o(x) = Exp(G)$. The set
of all prime factors of $|G|$, the set of all  element orders of
$G$ and the number of elements of order $i$ in $G$ are denoted by
$\pi(G)$, $\pi_e(G)$ and $\omega_i(G)$, respectively. The
$\varphi(n)$ denotes the Euler totient function. The notation
\textit{complete  graph} of order $n$  is denoted by $K_n$ and
$K_{m,n}$ denotes the \textit{complete bipartite graph} with
parts of sizes $m$ and $n$, respectively. Our other notations are
standard and can be taken from  \cite{12,13}. We encourage the
interested readers to consult papers \cite{8,10} for more
information on this topic.

\section{Main Results}
The \textit{proper power graph $\mathcal{P^\ast}(G)$}
\cite{01,02} and its \textit{proper order supergraph}
$\mathcal{S^\ast}(G)$ are defined as graphs constructed from
$\mathcal{P}(G)$ and $\mathcal{S}(G)$ by removing identity
element of $G$, respectively. We start this section by comparing
the power graph and its order supergraph. It is easy to see that
every non-identity element is adjacent with identity in
$\mathcal{S}(G)$ which proves that $\mathcal{S}(G)$ is connected
and its diameter is at most two. On the other hand, if $G$ has
even order then identity has odd degree and so $\mathcal{S}(G)$
is not Eulerian.

Suppose $\Gamma$ and $\Delta$ are two graphs, $u$ is a vertex of $\Gamma$ and
$v$ is a vertex in $\Delta$. A splice of $\Gamma$ and $\Delta$ at the vertices
$u$ and $v$ is obtained by identifying the vertices $u$ and $v$
in the union of $\Gamma$ and $\Delta$ which is denoted by $S(\Gamma,\Delta,u,v)$
\cite{33}.

\begin{ex}
Consider the dihedral group $D_{2n} = \langle r,s \ | \ r^n = s^2
= 1, srs = r^{-1}\rangle$. If $n$ is a power of $2$ then
obviously $\mathcal{S}(D_{2n})$ is a complete graph. If $n$ is
odd then $sr^i$, $1 \leq i \leq n$, are all involutions of
$D_{2n}$ which forms an $n-$vertex complete subgraph of
$\mathcal{S}(D_{2n})$. Thus, $\mathcal{S}(D_{2n})$ is a splice of
$\mathcal{S}(Z_{n})$ and complete graph $K_n$ at identity element
of dihedral group. Finally, if $n$ is even but not a power of $2$
then $\mathcal{S}(D_{2n})$   can be constructed from
$\mathcal{S}(Z_{n})$ by adding an $n-$clique $T$ such that each
vertex of $T$ is adjacent  with all elements of $\langle r\rangle$
of even order.
\end{ex}

It is obvious that $G \cong H$ implies that $\mathcal{S}(G) \cong \mathcal{S}(H)$.
If $G$ and $H$ are two non-isomorphic $p-$group with the same order then $\mathcal{S}(G)$
and $\mathcal{S}(H)$ are complete and so they are isomorphic. So, the converse is not generally correct.

\begin{thm}
$\mathcal{P}(G)$ = $\mathcal{S}(G)$ if and only if $G$ is cyclic.
\end{thm}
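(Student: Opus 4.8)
The plan is to prove both implications, using the elementary observation that $\mathcal{P}(G)$ is always a subgraph of $\mathcal{S}(G)$ on the same vertex set: if $x$ is a power of $y$ then $o(x) \mid o(y)$, so every edge of the power graph is an edge of the order supergraph. Hence the equality $\mathcal{P}(G) = \mathcal{S}(G)$ is equivalent to the reverse containment, namely that every $\mathcal{S}(G)$-edge is already a $\mathcal{P}(G)$-edge, and this is the only thing that needs to be checked in each direction.

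For the direction $G$ cyclic $\Rightarrow \mathcal{P}(G) = \mathcal{S}(G)$, I would take an arbitrary $\mathcal{S}(G)$-edge $\{x,y\}$ with, say, $o(x) \mid o(y)$, and exploit the fact that a cyclic group has a unique subgroup of each order. Writing $d = o(y)$, the element $y$ generates the unique cyclic subgroup of order $d$, which coincides with the set $\{g \in G : g^{d} = 1\}$. Since $o(x) \mid d$ gives $x^{d} = 1$, we obtain $x \in \langle y \rangle$, so $x$ is a power of $y$ and $\{x,y\}$ is a $\mathcal{P}(G)$-edge. This yields the reverse containment and hence equality.

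For the converse I would argue by contraposition: assuming $G$ is not cyclic, I would produce an edge of $\mathcal{S}(G)$ that is not an edge of $\mathcal{P}(G)$. The natural candidate is a pair $x, y$ of elements of the \emph{same} order $m$ with $\langle x \rangle \neq \langle y \rangle$. Such a pair is automatically joined in $\mathcal{S}(G)$, since $o(x) = o(y)$; but it cannot be joined in $\mathcal{P}(G)$, because if $x$ were a power of $y$ then $\langle x \rangle \subseteq \langle y \rangle$, and equality of orders would force $\langle x \rangle = \langle y \rangle$, a contradiction (and symmetrically if $y$ is a power of $x$).

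The main obstacle is therefore to show that a non-cyclic finite group must contain two distinct cyclic subgroups of the same order. I would establish this by a counting argument with the Euler totient function. If, on the contrary, $G$ had at most one cyclic subgroup of each order, then every element of order $m$ would lie in that single subgroup and be one of its $\varphi(m)$ generators, giving $\omega_m(G) \le \varphi(m)$ for each $m$, with $\omega_m(G) = 0$ whenever $m \nmid |G|$. Summing over all orders and using $\sum_{m \mid |G|} \varphi(m) = |G|$ yields
\[
|G| = \sum_{m} \omega_m(G) \le \sum_{m \mid |G|} \varphi(m) = |G|,
\]
so equality holds throughout and in particular $\omega_{|G|}(G) = \varphi(|G|) \ge 1$. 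Thus $G$ has an element of order $|G|$ and is cyclic, contrary to assumption. This forces the existence of the required pair $x, y$ and completes the contrapositive.
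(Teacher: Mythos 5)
Your proof is correct, and in the converse direction it takes a genuinely different and more elementary route than the paper. Both arguments share the same skeleton: since $\mathcal{P}(G)$ is always a spanning subgraph of $\mathcal{S}(G)$, everything reduces to the containment $E(\mathcal{S}(G)) \subseteq E(\mathcal{P}(G))$, and in the converse both proofs extract from the equality of the graphs the statement that $G$ has at most one cyclic subgroup of each order. The divergence is in how that statement forces $G$ to be cyclic. The paper argues structurally: uniqueness of cyclic subgroups of each order makes every cyclic subgroup (hence every subgroup) normal, so $G$ is a Dedekind group, i.e.\ abelian or Hamiltonian by the classification in the cited reference; the Hamiltonian case is excluded because $Q_8$ has several cyclic subgroups of order $4$, and the abelian non-cyclic case is excluded because such a group has at least two subgroups of some prime order. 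You instead run the classical Euler-totient count (the same argument that shows finite subgroups of the multiplicative group of a field are cyclic): at most one cyclic subgroup per order gives $\omega_m(G) \le \varphi(m)$, and summing against $\sum_{m \mid |G|} \varphi(m) = |G|$ forces $\omega_{|G|}(G) = \varphi(|G|) \ge 1$, so $G$ is cyclic. Your route is self-contained and avoids quoting the Dedekind--Hamiltonian structure theorem, which is the heaviest ingredient in the paper's proof; the paper's route, in exchange, makes the group-theoretic mechanism (normality of all subgroups) explicit. Your forward direction, using that in a cyclic group the unique subgroup of order $d$ equals $\{g \in G : g^{d} = 1\}$, is also a cleaner packaging of the paper's gcd computation with exponents. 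One small point worth making explicit in a final write-up: the two elements you produce satisfy $x \neq y$ precisely because $\langle x \rangle \neq \langle y \rangle$, so they do give a legitimate edge of $\mathcal{S}(G)$.
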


\begin{proof}
Let $G$ be a cyclic group of order $n$  generated by $x$. Since
$\mathcal{P}(G)$ and $\mathcal{S}(G)$ have the same vertex set
$G$, it is enough to prove that $E(\mathcal{S}(G)) \subseteq
E(\mathcal{P}(G))$. Suppose $a, b$ are adjacent vertices in
$\mathcal{S}(G)$. Then $o(a) | o(b)$ or $o(b) | o(a)$. Set $a =
x^m$ and $b = x^k$. Without loss of generality, we can assume
that $o(a) | o(b)$. This implies that $\frac{n}{(n,m)} |
\frac{n}{(n,k)}$ and so $(n,k) | (n,m)$. Hence $(n,k) | m$ which
shows that $x^m \in \langle x^k\rangle$. Therefore, $a$ and $b$
are adjacent in $E(\mathcal{P}(G))$, as desired.

Conversely, we assume that $E(\mathcal{S}(G)) =
E(\mathcal{P}(G))$. Then for each element $x, y \in G$ with $o(x)
= o(y)$ we have $\langle x \rangle = \langle y \rangle$. Choose
$x \in G$. Hence, there is a unique cyclic subgroup of order
$o(x)$ and so each cyclic subgroup is normal. This implies that
each subgroup is normal and so $G$ is abelian or Hamiltonian
\cite{14}. If $G$ is Hamiltonian, then $G$ can be written as the
direct product of the quaternion group $Q_8$, an elementary
abelian $2-$group and a finite abelian group $A$ of odd order.
Since $Q_8$ has more than one cyclic subgroup, $G$ is abelian.
But, every abelian non-cyclic group has at least two subgroups of
a prime order $p$, where $p | |G|$. Therefore, $G$ is cyclic.
\end{proof}

\begin{thm} \label{thm0}
Let $G$ be a finite group. $\mathcal{S}(G)$  is complete if and only
if $G$ is a  $p-$group.
\end{thm}

\begin{proof}
If $G$ is a finite $p-$group then clearly $\mathcal{S}(G)$ is
complete. Conversely, we assume that $p$ and $q$ are two distinct
prime divisors of $G$. Then there are elements $x$ and $y$ of
orders $p$ and $q$, respectively.  Therefore, $x$ and $y$ are not
adjacent in  $\mathcal{S}(G)$, proving the result.
\end{proof}

A group $G$ is said to be periodic if and only if every element
of $G$ has finite order. It is possible to define $\mathcal{S}(G)$, when $G$ is
periodic and since the identity element is again adjacent to all
elements of $G$, $\mathcal{S}(G)$ will be connected.

Suppose $G$ is a group and $\Gamma$ is a simple graph.  The group
$G$ is called an $EPPO-$group, if all elements of $G$ have prime
power order. It is an $EPO-$group, if all elements have prime
order. An independent set for $\Gamma$ is a set of vertices such
that no two of which are adjacent. The cardinality of an
independent set with maximum size is called the independent
number of $\Gamma$, denoted by $\alpha(\Gamma)$.

\begin{thm}\label{2.5}
$|\pi(G)| \leq \alpha(\mathcal{S}(G)) \leq |\pi_e(G)|-1$. The
right-hand equality is attained if and only if $G$ is $EPO-$group.
\end{thm}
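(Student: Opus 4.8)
The plan is to translate the independence number into a purely order-theoretic quantity. Since two elements $x,y$ are adjacent in $\mathcal{S}(G)$ precisely when $o(x)\mid o(y)$ or $o(y)\mid o(x)$, a set of vertices is independent if and only if their orders are pairwise incomparable under divisibility; in particular the elements of such a set have pairwise distinct orders. Conversely, any antichain in the divisibility poset $(\pi_e(G),\mid)$ is realised by choosing one element of each order. Hence $\alpha(\mathcal{S}(G))$ equals the size of a largest antichain in $(\pi_e(G),\mid)$, and I would use this dictionary throughout (assuming $G$ nontrivial, as otherwise the right-hand bound is vacuous).

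For the left-hand inequality I would invoke Cauchy's theorem: for each prime $p\in\pi(G)$ there is an element of order $p$. Distinct primes are pairwise incomparable under divisibility, so these elements form an independent set of size $|\pi(G)|$, giving $|\pi(G)|\le\alpha(\mathcal{S}(G))$.

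For the right-hand inequality I would use that $1\in\pi_e(G)$ divides every order, so the identity is adjacent to every other vertex of $\mathcal{S}(G)$. Consequently any independent set of size at least two omits the identity, and the orders occurring in it are distinct elements of $\pi_e(G)\setminus\{1\}$; there are at most $|\pi_e(G)|-1$ of them, and a singleton set is harmless. This yields $\alpha(\mathcal{S}(G))\le|\pi_e(G)|-1$.

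The crux is the equality condition. Equality forces a largest antichain to have size $|\pi_e(G)\setminus\{1\}|$, and since every antichain of size at least two already lies inside $\pi_e(G)\setminus\{1\}$, the entire set $\pi_e(G)\setminus\{1\}$ must itself be an antichain; that is, the nontrivial element orders are pairwise incomparable. I would then argue that this happens exactly when every nontrivial order is prime: if some $x$ had composite order $n$, picking a prime $p\mid n$ gives $o(x^{n/p})=p$ with $p\mid n$ and $p\neq n$, so $p$ and $n$ are comparable nontrivial orders, a contradiction. Thus the whole set of nontrivial orders is an antichain if and only if $G$ is an $EPO-$group, and conversely for an $EPO-$group one has $\pi_e(G)=\{1\}\cup\pi(G)$, whence $|\pi_e(G)|-1=|\pi(G)|=\alpha(\mathcal{S}(G))$ by the left-hand inequality. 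I expect the main obstacle to be precisely this step: recognising that a maximal antichain attains size $|\pi_e(G)|-1$ only when the poset of nontrivial orders is flat, and converting ``composite order'' into a genuine comparable pair via a suitable power of the element.
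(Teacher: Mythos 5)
Your proof is correct and takes essentially the same route as the paper: your antichain dictionary is just another phrasing of the paper's decomposition of $\mathcal{S}(G)$ as the join $\Delta[K_{\omega_{a_1}(G)},\ldots,K_{\omega_{a_k}(G)}]$ over the divisibility graph $\Delta$ on $\pi_e(G)$, and both arguments get the lower bound from one element of each prime order, the upper bound from the fact that the identity's order divides every order, and the equality case by passing from a composite order to a prime-order power to produce a comparable pair. The only difference is cosmetic: you explicitly flag the trivial-group edge case (where the right-hand inequality fails), which the paper leaves implicit.
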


\begin{proof}
Suppose $|G| = p_1^{\alpha_1} \cdots p_r^{\alpha_r}$, where $r
\geq 1$ and $p_i$, $1 \leq i \leq r$, are prime numbers. Thus
$|\pi(G)| = r$. Choose elements $g_i$ of order $p_i$, $1 \leq i
\leq r$ and set $A = \{ g_1, \ldots, g_r\}$. Then $A$ is an
independent subset of $G$ and so $|\pi(G)| \leq
\alpha(\mathcal{S}(G))$. We now assume that $\pi_e(G) = \{ a_1,
\ldots, a_k\}$. Define a graph $\Delta$ such that $V(\Delta) =
\pi_e(G)$ and two vertices $x$ and $y$ are adjacent if and only
if $x | y$ or $y | x$.  Define the induced subgraph $R_i$ of $\mathcal{S}(G)$
with the set of all elements of order $a_i$ as vertex set. Then $R_i$ is a complete graph of
order $\omega_{a_i}(G)$. Then $\mathcal{S}(G) = \Delta[R_1, \ldots, R_k] \cong
\Delta[K_{\omega_{a_1}(G)}, \ldots, K_{\omega_{a_k}(G)}]$. If the
order of an element in $K_{\omega_{a_i}(G)}$ divides the order of
an element in $K_{\omega_{a_j}(G)}$ then all elements of
$K_{\omega_{a_i}(G)}$ will be adjacent to all elements of
$K_{\omega_{a_j}(G)}$. As a consequence  $\alpha(\mathcal{S}(G))
\leq |\pi_e(G)| - 1$.

If $G$ is an $EPO-$group then all elements of order $p$, $p$ is
prime, will be a clique. So, $ \alpha(\mathcal{S}(G)) =
|\pi_e(G)| - 1$. Conversely, we assume that $ \alpha(\mathcal{S}(G)) =
|\pi_e(G)| - 1$. If $G$ has a non-identity element of a non-prime order $a$
then there exists an element $b \in \langle a \rangle$ of prime order. So,
each element of order $o(a)$ will be adjacent to each element of order $o(b)$
which contradicts by our assumption. Therefore, all non-identity
elements of $G$ have prime order, as desired.
\end{proof}

It can be easily seen that if $G$ is an $EPPO-$group then $|\pi(G)|
= \alpha(\mathcal{S}(G))$. On the other hand, $2 = |\pi(Z_6)| =
\alpha(\mathcal{S}(Z_6))$, but $Z_6$ is not an $EPPO-$group. So,
the following question remains open:

\begin{qu}
What is the structure of groups with $|\pi(G)| =
\alpha(\mathcal{S}(G))$?
\end{qu}

For a given group $G$, the number of edges in the order supergraph
$\mathcal{S}(G)$ is denoted by $e(\mathcal{S}(G))$. In the
following theorem an exact expression for $e(\mathcal{S}(G)) =
|E(\mathcal{S}(G))|$ is calculated.

\begin{thm}\label{th2.7}
$e(\mathcal{S}(G)) = \frac{1}{2}\sum_{x \in
G}\left(2\sum_{d|o(x)}\omega_d(G)-\omega_{o(x)}(G)-1\right)$.
\end{thm}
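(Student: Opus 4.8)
The plan is to apply the handshaking lemma, $e(\mathcal{S}(G)) = \frac{1}{2}\sum_{x\in G} deg(x)$, and reduce everything to computing $deg(x)$ for a fixed vertex $x$. By definition, $y$ is a neighbour of $x$ exactly when $o(y)\mid o(x)$ or $o(x)\mid o(y)$, so I would first split the closed neighbourhood of $x$ into $A_x=\{y\in G: o(y)\mid o(x)\}$ and $B_x=\{y\in G: o(x)\mid o(y)\}$. These two sets meet precisely in the elements of order $o(x)$, and both contain $x$ itself, so inclusion–exclusion gives $deg(x)=|A_x|+|B_x|-\omega_{o(x)}(G)-1$. Counting each set by grouping elements according to their order (the same partition of $G$ used in Theorem \ref{2.5}), one has $|A_x|=\sum_{d\mid o(x)}\omega_d(G)$ and $|B_x|=\sum_{o(x)\mid m}\omega_m(G)$, where the latter sum runs over the multiples $m\in\pi_e(G)$ of $o(x)$.

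Summing over all $x\in G$ then yields $2e(\mathcal{S}(G)) = \sum_{x}\big(\sum_{d\mid o(x)}\omega_d(G)+\sum_{o(x)\mid m}\omega_m(G)-\omega_{o(x)}(G)-1\big)$, and the remaining task is to replace the multiples-sum by a second copy of the divisors-sum, i.e. to prove $\sum_{x}\sum_{o(x)\mid m}\omega_m(G)=\sum_{x}\sum_{d\mid o(x)}\omega_d(G)$. This global identity is the crux of the argument. I would establish it by interpreting each side as a count of ordered pairs: $\sum_{x}\sum_{d\mid o(x)}\omega_d(G)=|\{(x,y)\in G\times G: o(y)\mid o(x)\}|$, while $\sum_{x}\sum_{o(x)\mid m}\omega_m(G)=|\{(x,y)\in G\times G: o(x)\mid o(y)\}|$. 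The transposition $(x,y)\mapsto(y,x)$ is a bijection between these two pair-sets, so they have equal cardinality, which is exactly the desired identity.

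Substituting this identity turns the bracket into $2\sum_{d\mid o(x)}\omega_d(G)-\omega_{o(x)}(G)-1$, and dividing by two gives the claimed formula. I expect the only genuinely delicate point to be this symmetry step: the two inner sums $\sum_{d\mid o(x)}\omega_d(G)$ and $\sum_{o(x)\mid m}\omega_m(G)$ are \emph{not} equal for a fixed $x$, so one cannot argue term by term and must pass to the global pair-counting bijection after summing over $G$. Everything else, namely the handshaking lemma, the inclusion–exclusion for $deg(x)$, and the translation of cardinalities into the functions $\omega_d(G)$, is routine bookkeeping.
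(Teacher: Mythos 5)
Your proof is correct, but it is organized differently from the paper's. The paper introduces the directed graph $\overrightarrow{\mathcal{S}}(G)$ with an arc $(x,g)$ whenever $o(g)\mid o(x)$, so that each vertex has out-degree $\sum_{d\mid o(x)}\omega_d(G)-1$; summing out-degrees counts every edge of $\mathcal{S}(G)$ exactly once, except the edges joining two elements of equal order, which carry two arcs and are counted twice, and subtracting the per-vertex correction $\omega_{o(x)}(G)-1$ before halving yields the formula. In other words, by orienting each divisibility relation from the larger order to the smaller, the paper never needs your multiples-sum $|B_x|=\sum_{o(x)\mid m}\omega_m(G)$ at all. You instead work with the undirected degree via the handshaking lemma, which forces both the divisor-sum and the multiples-sum to appear, and you then eliminate the latter by the transposition bijection $(x,y)\mapsto(y,x)$ applied \emph{after} summing over $G$ --- correctly flagging that the identity $\sum_x|A_x|=\sum_x|B_x|$ fails vertex-by-vertex and holds only globally. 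The two arguments are dual forms of the same double count: your global symmetry step is precisely what the paper's orientation trick renders invisible. Your version costs one extra idea (the pair-counting bijection) but stays entirely inside the undirected graph, and its bookkeeping (inclusion--exclusion on the closed neighbourhood) is if anything spelled out more carefully than the paper's rather terse identification of the doubly-directed arcs with equal-order pairs.
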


\begin{proof}
Define the directed graph $\overrightarrow{\mathcal{S}}(G)$ with
vertex set $G$ and arc set $E(\overrightarrow{\mathcal{S}}(G)) =
\{ (x,g) \ | \ o(g) | o(x)\}.$ Suppose $x \in G$. Then $$Outdeg
(x)=\left|\{ g\in G,o(g)|o(x)\} \right|=\sum _{d|o(x)}\omega _{d}
(G)-1.$$ It is clear that the whole number of arcs is equal to
the sum of $Outdeg(x)$ overall vertices of
$\overrightarrow{\mathcal{S}}(G)$.

On the other hand, $\sum_{x \in
V(\overrightarrow{\mathcal{S}}(G))}Outdeg(x) =
2e(\overrightarrow{\mathcal{S}}(G))$. To compute the number of
edges in $\mathcal{S}(G)$, it is enough to count once the edges of
$\overrightarrow{\mathcal{S}}(G)$ with two different directions.
But we have an undirected edge connecting $g$ and $x$, when $o(g)
| o(x)$ and $o(x) | o(g)$. In such a case, we will have $o(x) =
o(g)$. Therefore, the number of edges with two different
directions is $\omega_{o(x)}(G)-1$ and so $$e(\mathcal{S}(G)) =
\frac{1}{2}\sum_{x \in
G}\left(2\sum_{d|o(x)}\omega_d(G)-\omega_{o(x)}(G)-1\right),$$
proving the result.
\end{proof}

In the following corollary, we apply the previous theorem to present a new proof for \cite[Corollary 4.3]{3}.

\begin{cor}
$2e(\mathcal{P}(Z_n)) = \sum_{d|n}\left[ 2d - \varphi(d) - 1\right]\varphi(d)$.
\end{cor}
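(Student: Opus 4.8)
The plan is to reduce the statement to the general edge-count formula of Theorem \ref{th2.7} by first observing that for a cyclic group the power graph and its order supergraph coincide. Indeed, by the theorem characterising when $\mathcal{P}(G) = \mathcal{S}(G)$, which holds precisely when $G$ is cyclic, we have $\mathcal{P}(Z_n) = \mathcal{S}(Z_n)$ and hence $e(\mathcal{P}(Z_n)) = e(\mathcal{S}(Z_n))$. Thus it suffices to evaluate the right-hand side of Theorem \ref{th2.7} for $G = Z_n$.

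Next I would specialise the two ingredients $\omega_d(G)$ and $\sum_{d \mid o(x)} \omega_d(G)$ to the cyclic case. For $Z_n$ the number of elements of a given order $d$ is $\omega_d(Z_n) = \varphi(d)$ when $d \mid n$ and $0$ otherwise, since a cyclic group of order $n$ has a unique subgroup of each order $d \mid n$, contributing exactly $\varphi(d)$ generators. Applying the identity $\sum_{d \mid m} \varphi(d) = m$ with $m = o(x)$ then collapses the inner divisor sum to $\sum_{d \mid o(x)} \omega_d(Z_n) = \sum_{d \mid o(x)} \varphi(d) = o(x)$. Substituting these into the bracket of Theorem \ref{th2.7} turns the summand for a fixed $x$ into $2\,o(x) - \varphi(o(x)) - 1$.

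It then remains to reorganise the resulting sum $2e(\mathcal{S}(Z_n)) = \sum_{x \in Z_n}\bigl(2\,o(x) - \varphi(o(x)) - 1\bigr)$ according to element order. Grouping the $n$ elements of $Z_n$ by their order $d$, each divisor $d \mid n$ accounts for exactly $\varphi(d)$ elements, all contributing the identical term $2d - \varphi(d) - 1$. This yields
$$2e(\mathcal{P}(Z_n)) = \sum_{d \mid n}\bigl[2d - \varphi(d) - 1\bigr]\varphi(d),$$
which is the claimed formula.

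Since every step is either a direct citation of Theorem \ref{th2.7} or an elementary number-theoretic fact, there is no genuine obstacle here; the one point that must be handled with care is the bookkeeping in the final regrouping, namely keeping the two roles of $\varphi$ separate — one counting the elements of each order $d$ as an outer multiplicity, and the other appearing inside the bracket as $\omega_d(Z_n) = \varphi(d)$ — so that each divisor $d$ contributes precisely $\varphi(d)$ copies of $2d - \varphi(d) - 1$ and nothing is double-counted.
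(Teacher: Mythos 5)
Your proof is correct and follows essentially the same route as the paper: both reduce the claim to Theorem \ref{th2.7} via the identity $\mathcal{P}(Z_n) = \mathcal{S}(Z_n)$ for cyclic groups, then evaluate the resulting sum using $\omega_d(Z_n) = \varphi(d)$ and $\sum_{d \mid m}\varphi(d) = m$, grouping elements by their order. Your write-up is in fact more explicit than the paper's (which merely asserts the intermediate formulas are ``easy to check''), and your careful bookkeeping avoids the typo in the paper's first displayed formula, where $\sum_{d\mid n} d\varphi(n)$ should read $\sum_{d\mid n} d\varphi(d)$.
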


\begin{proof}
It is easy to check the following formulas:
\begin{eqnarray*}
\sum_{x \in Z_n}\sum_{d | o(x)} \omega_d(Z_n) &=& \sum_{d | n}d\varphi(n),\\
\sum_{x \in Z_n}\left[\omega_{o(x)}(Z_n) - 1\right] &=& \sum_{d | n}\varphi^2(d) - \sum_{d | n}\varphi(d).
\end{eqnarray*}
Therefore, by Theorem \ref{th2.7} and the fact that $\mathcal{P}(Z_n) = \mathcal{S}(Z_n)$, $2e(\mathcal{P}(Z_n))$ = $2e(\mathcal{S}(Z_n))$ = $\sum_{d|n}\left[ 2d - \varphi(d) - 1\right]\varphi(d)$, as desired.
\end{proof}

\begin{thm}
Let $G$ be a finite group of order $p_{1}^{n_{1}}\cdots
p_{k}^{n_{k}}$ and $$V_{i} =\{ g\in G \ | \ g \ne 1,
|g|\left|p_{i} ^{n_{i} } \right.\}.$$ $G$ is an $EPPO-$group if
and only if $\mathcal{S}(G) = K_{1}+(\bigcup_{i=1}^{k}K_{|V_{i}|}
)$.
\end{thm}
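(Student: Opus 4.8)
The plan is to prove the two implications separately, the engine in both directions being that the sets $V_1, \ldots, V_k$ are pairwise disjoint. Indeed, if $g \in V_i \cap V_j$ with $i \neq j$, then $o(g)$ divides both $p_i^{n_i}$ and $p_j^{n_j}$, hence divides their greatest common divisor $1$, forcing $g = 1$, a contradiction. Thus the $V_i$ are disjoint subsets of $G \setminus \{1\}$, and $G$ is an $EPPO$-group exactly when they exhaust $G \setminus \{1\}$, since a non-identity element has prime-power order if and only if its order divides some single $p_i^{n_i}$.

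For the forward direction I would assume $G$ is $EPPO$ and simply read off the adjacencies of $\mathcal{S}(G)$. The identity is joined to every other vertex, which accounts for the $K_1$ summand of the join. If $x, y \in V_i$ then $o(x)$ and $o(y)$ are both powers of $p_i$, hence comparable under divisibility, so $x$ and $y$ are adjacent; therefore the subgraph induced on $V_i$ is $K_{|V_i|}$. If instead $x \in V_i$ and $y \in V_j$ with $i \neq j$, then $o(x)$ and $o(y)$ are nontrivial powers of distinct primes, so neither divides the other and $x, y$ are non-adjacent. Together with $\bigcup_i V_i = G \setminus \{1\}$ (from the $EPPO$ hypothesis), these statements say precisely that $\mathcal{S}(G) = K_1 + \left(\bigcup_{i=1}^k K_{|V_i|}\right)$.

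For the converse the cleanest route is a vertex count. If $\mathcal{S}(G) = K_1 + \left(\bigcup_{i=1}^k K_{|V_i|}\right)$, then comparing the number of vertices on the two sides gives $|G| = 1 + \sum_{i=1}^k |V_i|$. Because the $V_i$ are disjoint subsets of the $(|G|-1)$-element set $G \setminus \{1\}$, this equality forces $\bigcup_i V_i = G \setminus \{1\}$. Hence every non-identity element of $G$ has order dividing some $p_i^{n_i}$, that is, prime-power order, so $G$ is an $EPPO$-group.

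I do not anticipate a real obstacle, as the argument is largely bookkeeping; the delicate point is the forward direction's assertion that vertices from different $V_i$ are never adjacent, which is exactly where the coprimality of the prime powers (and hence the $EPPO$ hypothesis) is used. I would also be careful to treat the symbol $=$ as graph equality, with the vertex of $K_1$ identified with the identity and the $i$-th clique with $V_i$; under this reading the vertex count in the converse is immediate, but the same count survives if $=$ is weakened to isomorphism, so the conclusion is robust either way.
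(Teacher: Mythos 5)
Your proof is correct, and while your forward direction coincides with the paper's, your converse takes a genuinely different route. For the forward implication the paper simply asserts, ``by the structure of the order supergraph,'' that each $V_i$ is a clique with no edges between $V_i$ and $V_j$ for $i\neq j$; you supply the divisibility details, but the argument is the same. For the converse, the paper works through the edge structure by contradiction: if some $g\neq 1$ had order divisible by two distinct primes $p_i, p_j$, then elements $g_i, g_j$ of orders $p_i, p_j$ lie in $V_i$ and $V_j$, and $g$ would be adjacent in $\mathcal{S}(G)$ to vertices of two distinct cliques, which is impossible for a non-identity vertex of $K_1 + \left(\bigcup_{i=1}^{k} K_{|V_i|}\right)$. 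You never look at edges at all: the pairwise disjointness of the $V_i$ together with the vertex count $|G| = 1 + \sum_{i=1}^{k}|V_i|$ forces $\bigcup_{i=1}^{k} V_i = G\setminus\{1\}$, which is exactly the $EPPO$ condition. Your version is more economical and, as you observe, is the only one of the two that survives weakening the hypothesis from graph equality to graph isomorphism, since a vertex count is isomorphism-invariant, whereas the paper's placement of $g_i$ inside the concrete clique $K_{|V_i|}$ presupposes equality of vertex sets. What the paper's argument buys in exchange is the explicit combinatorial obstruction --- a non-prime-power element would be a non-identity vertex adjacent to two distinct cliques --- local structural information that the counting argument deliberately discards.
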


\begin{proof}
Suppose $G$ is an $EPPO-$group then by the structure of the order
supergraph of $G$, each $V_i$ is a clique in  $\mathcal{S}(G)$
and there is no edge connecting $V_i$ and $V_j$, $i \ne j$. Hence
$\mathcal{S}(G) = K_{1}+(\bigcup_{i=1}^{k}K_{|V_{i}|} )$.

Conversely, we assume that $\mathcal{S}(G) =
K_{1}+(\bigcup_{i=1}^{k}K_{|V_{i}|} )$, $e \ne g \in G$ and
$p_ip_j | |G|$. We also assume that $g_i$ and $g_j$ are two
elements in $G$ such that $o(g_i) = p_i$ and $o(g_j) = p_j$.
Obviously, $g_i \in V_i$ and $g_j \in V_j$ and so $g_i \in
K_{|V_i|}$ and $g_j\in K_{|V_j|}$. Since $|g_i| | |g|$ and $|g_j|
| |g|$, $g$ is adjacent to $g_i$ and $g_j$, which is impossible.
Therefore, $G$ is an $EPPO-$group, as desired.
\end{proof}

\begin{cor}\label{co9}
Suppose $G$ is an $EPPO-$group which is not a $p-$group. Then the
number of components in $\mathcal{S}^\ast(G)$ is equal to
$|\pi(G)|$.
\end{cor}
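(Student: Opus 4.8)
The plan is to read off the corollary directly from the preceding theorem, whose characterization of the order supergraph of an $EPPO$-group does almost all of the work. Recall that theorem states that $G$ is an $EPPO$-group if and only if $\mathcal{S}(G) = K_{1}+(\bigcup_{i=1}^{k}K_{|V_{i}|})$, where the sets $V_{i}=\{g\in G \mid g\neq 1,\ |g| \mid p_{i}^{n_{i}}\}$ collect the nonidentity elements whose order is a power of the $i$-th prime. Since $G$ is assumed to be an $EPPO$-group, this join decomposition is available. The single vertex $K_{1}$ is precisely the identity element, so forming the proper order supergraph $\mathcal{S}^{\ast}(G)$ amounts to deleting the $K_{1}$ part from the join.

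First I would observe that removing the identity from a join of the form $K_{1}+H$ leaves exactly the graph $H$; here $H=\bigcup_{i=1}^{k}K_{|V_{i}|}$. Thus $\mathcal{S}^{\ast}(G)$ is isomorphic to the disjoint union $\bigcup_{i=1}^{k}K_{|V_{i}|}$. Next I would argue that each nonempty $K_{|V_{i}|}$ is a connected component of this disjoint union: each complete graph $K_{|V_{i}|}$ is connected on its own, and the union being disjoint means there are no edges between distinct blocks, so the components of $\mathcal{S}^{\ast}(G)$ are exactly those blocks $V_{i}$ that are nonempty. Therefore the number of components equals the number of indices $i$ for which $V_{i}\neq\varnothing$.

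The final step is to match this count with $|\pi(G)|$. By definition $\pi(G)=\{p_{1},\ldots,p_{k}\}$ is the set of primes dividing $|G|$, so $|\pi(G)|=k$. For each such prime $p_{i}$, Cauchy's theorem guarantees an element of order $p_{i}$, and that element lies in $V_{i}$ since $p_{i}\mid p_{i}^{n_{i}}$; hence every $V_{i}$ is nonempty. Consequently all $k$ blocks contribute a genuine component, and the number of components of $\mathcal{S}^{\ast}(G)$ is exactly $k=|\pi(G)|$.

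The one point that warrants care, rather than being a deep obstacle, is the role of the hypothesis that $G$ is not a $p$-group. If $G$ were a $p$-group then $k=1$ and the single block $V_{1}$ would comprise all nonidentity elements, so $\mathcal{S}^{\ast}(G)$ would be the connected graph $K_{|V_{1}|}$ with just one component; the statement $|\pi(G)|$ components still holds numerically there too, but the hypothesis $k\geq 2$ makes the corollary genuinely about a disconnected $\mathcal{S}^{\ast}(G)$ and rules out the degenerate case. I would flag that the nondegeneracy is what makes the disjoint-union structure nontrivial, and that the essential content is simply the combination of the preceding theorem with Cauchy's theorem to ensure each $V_{i}$ is nonempty.
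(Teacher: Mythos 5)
Your proposal is correct and follows exactly the route the paper intends: the corollary is stated without a separate proof precisely because it is an immediate consequence of the preceding theorem's decomposition $\mathcal{S}(G) = K_{1}+(\bigcup_{i=1}^{k}K_{|V_{i}|})$, and your argument (delete $K_1$, note the blocks are pairwise disconnected complete graphs, and invoke Cauchy's theorem to see each $V_i$ is nonempty so the count is $k=|\pi(G)|$) is just that consequence spelled out carefully. Your remark that the ``not a $p$-group'' hypothesis only excludes the degenerate connected case, with the numerical equality still holding there, is also accurate.
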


Following Williams \cite{16}, we assume that $G$ is a finite
group and construct its prime graph as follows: the vertices are
the primes dividing the order of the group, two vertices $p$ and
$q$ are joined by an edge if and only if $G$ contains an element
of order $pq$.

\begin{cor}\label{2.10}
If the prime graph of a group $G$ is totally disconnected then
$\mathcal{S}^\ast(G)$ is disconnected.
\end{cor}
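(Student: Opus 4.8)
The plan is to prove the contrapositive direction by connecting the combinatorial hypothesis on the prime graph to the adjacency structure of $\mathcal{S}^\ast(G)$. Recall that the prime graph of $G$ is \emph{totally disconnected} precisely when $G$ has no element of order $pq$ for any two distinct primes $p,q$ dividing $|G|$; equivalently, every element order is a prime power, so this hypothesis says exactly that $G$ is an $EPPO$-group. The first step, therefore, is to make this identification explicit: if no edge appears in the prime graph, then no element of $G$ can have order divisible by two distinct primes, hence every non-identity element has prime power order.

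Once $G$ is recognized as an $EPPO$-group, I would split into two cases according to whether $G$ is a $p$-group. If $G$ is a $p$-group, then $|\pi(G)| = 1$, and the prime graph being totally disconnected with a single vertex is vacuously the hypothesis; but in this case $\mathcal{S}(G)$ is complete by Theorem~\ref{thm0}, so $\mathcal{S}^\ast(G)$ is a complete graph on $|G|-1$ vertices and is connected rather than disconnected. This shows one must assume $|\pi(G)| \geq 2$ for the conclusion to hold, so I would note that the statement is intended for groups that are not $p$-groups (i.e. the prime graph has at least two vertices). Under that reading, the main work is the non-$p$-group case.

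For the substantive case, I would invoke Corollary~\ref{co9} directly: since $G$ is an $EPPO$-group that is not a $p$-group, the number of connected components of $\mathcal{S}^\ast(G)$ equals $|\pi(G)| \geq 2$. A graph with at least two components is by definition disconnected, which yields the conclusion. Thus the entire argument reduces to the translation step (prime graph totally disconnected $\Leftrightarrow$ $EPPO$-group) followed by an application of the preceding corollary.

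The main obstacle I anticipate is purely the boundary case discussed above: the literal statement as given fails when $G$ is a $p$-group, since then $\mathcal{S}^\ast(G)$ is connected. The honest resolution is to observe that the intended hypothesis is that the prime graph has more than one vertex (equivalently $|\pi(G)| \geq 2$), and under this reading the claim follows immediately from Corollary~\ref{co9}. Apart from this caveat, no genuine difficulty arises, as all the required structural information about $\mathcal{S}(G)$ for $EPPO$-groups has already been established.
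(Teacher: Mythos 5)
Your proof takes essentially the same route as the paper: translate the hypothesis into $G$ being an $EPPO$-group and then invoke Corollary~\ref{co9}. Your caveat about the $p$-group case is a correct and worthwhile refinement rather than a flaw --- the paper's own proof silently applies Corollary~\ref{co9}, whose hypothesis excludes $p$-groups, so the statement as literally written does fail for a nontrivial $p$-group, where the prime graph (a single vertex, no edges) is totally disconnected yet $\mathcal{S}^\ast(G)$ is complete and hence connected.
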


\begin{proof}
It is clear that $G$ is an $EPPO-$group if and only if its prime
graph is totally disconnected. We now apply the Corollary \ref{co9} to
deduce the result.
\end{proof}

The vertex connectivity of a graph $\Gamma$ is the minimum number of
vertices, $\kappa(\Gamma)$, whose deletion from $\Gamma$ disconnects it.

\begin{thm} \label{thm1}
The vertex connectivity of $\mathcal{S}(G)$ can be computed as
follows:
\begin{itemize}
\item $\kappa(\mathcal{S}(Z_n)) = n-1$, where $n = p^m$, $p$ is
prime and $m$ is a non-negative integer.

\item Suppose $n$ is not prime power. Then  $\kappa(\mathcal{S}(Z_n)) \geq \varphi(n) +
1$.
 The equality is satisfied if and only if $n = pq$, $p$
and $q$ are distinct prime numbers.
\end{itemize}
\end{thm}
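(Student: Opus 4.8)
The plan is to treat the two items separately, organizing everything around the set of \emph{universal vertices} of $\mathcal{S}(Z_n)$---those adjacent to every other vertex---and using the identity $\mathcal{P}(Z_n)=\mathcal{S}(Z_n)$. The first item is immediate: if $n=p^m$ then $Z_n$ is a cyclic $p$-group, so $\mathcal{S}(Z_n)$ is complete by Theorem~\ref{thm0}, and the vertex connectivity of $K_n$ is $n-1$.

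For the second item I would first pin down the universal vertices. A vertex of order $d\mid n$ is universal precisely when $d$ is comparable under divisibility to every divisor of $n$; the identity and the $\varphi(n)$ generators (orders $1$ and $n$) are obviously such. When $n$ is not a prime power these are the only ones: for any divisor $d$ with $1<d<n$ one can choose a prime-power divisor $p_i^{a_i}$ of $n$ incomparable to $d$, using that $n$ has at least two distinct prime factors. Thus $\mathcal{S}(Z_n)$ has exactly $\varphi(n)+1$ universal vertices, and the bound $\kappa(\mathcal{S}(Z_n))\geq\varphi(n)+1$ follows at once: after deleting any $T$ with $|T|\leq\varphi(n)$ some universal vertex $u$ survives, and every remaining vertex is adjacent to $u$, so the graph stays connected.

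Turning to the equality, the direction $n=pq$ is easy. The only proper nontrivial divisors are the incomparable primes $p$ and $q$, so removing the $\varphi(pq)+1$ universal vertices leaves the order-$p$ and order-$q$ elements as two nonempty cliques with no edges between them; this is a cut of size $\varphi(n)+1$, giving $\kappa\leq\varphi(n)+1$ and hence equality. For the converse I would argue contrapositively: assuming $n$ is not a prime power and $n\neq pq$, I want $\kappa\geq\varphi(n)+2$. Since any cut of size at most $\varphi(n)+1$ must contain all $\varphi(n)+1$ universal vertices, the only candidate is the universal set $U$ itself, so it suffices to show that $\mathcal{S}(Z_n)-U$ is connected.

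This connectivity statement is the main obstacle. I would reduce it to the graph $H$ on the proper nontrivial divisors of $n$ with $d\sim e$ iff $d\mid e$ or $e\mid d$: each order-class is a nonempty clique, so $\mathcal{S}(Z_n)-U$ is connected exactly when $H$ is. Every proper nontrivial divisor is divisible by some prime $p_i\mid n$, hence joined to the vertex $p_i$, so it remains to connect the prime vertices to each other. For distinct primes $p_i,p_j\mid n$ the product $p_ip_j$ divides $n$, and since $n\neq pq$ forces $p_ip_j<n$ it is again a proper nontrivial divisor adjacent to both $p_i$ and $p_j$; hence all primes lie in one component and $H$ is connected. The same computation clarifies the boundary case $n=pq$, where $p_ip_j=n$ is universal and the two prime vertices are therefore isolated in $H$.
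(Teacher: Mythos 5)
Your proposal is correct, and its skeleton coincides with the paper's: completeness of $\mathcal{S}(Z_n)$ for $n=p^m$, the universal vertices (identity plus the $\varphi(n)$ generators) for the lower bound $\kappa\geq\varphi(n)+1$, and an analysis of deleting the universal set $U$ for the equality case. The genuine difference is in how connectivity of $\mathcal{S}(Z_n)-U$ is established when $n$ is neither a prime power nor a product of two distinct primes. The paper runs a three-way case analysis --- $n$ divisible by at least three primes but not squarefree with exactly three prime factors, then $n=pqr$, then $n=p^kq^l$ with $k\geq 2$ or $l\geq 2$ --- in each case exhibiting paths through elements of order $p_iq_j$. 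You instead pass to the quotient divisor graph $H$ on proper nontrivial divisors (in effect the order graph $\mathcal{O}(Z_n)$, a quotient the paper mentions in its introduction but does not exploit in this proof) and give one uniform argument: every proper nontrivial divisor is joined to a prime vertex, and any two prime vertices $p_i,p_j$ are joined through $p_ip_j$, which is a proper nontrivial divisor precisely because $n\neq p_ip_j$. This single observation subsumes all of the paper's cases. You also make explicit a step the paper leaves implicit: since any cut must contain all universal vertices, a cut of size exactly $\varphi(n)+1$ can only be $U$ itself, which is what justifies reducing the equality question to the connectivity of $\mathcal{S}(Z_n)-U$. Your version is both tighter and more rigorous at that point; the paper's version, by contrast, spells out concrete paths between explicitly chosen group elements, which some readers may find more hands-on but which costs it the case split.
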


\begin{proof}

If $n = p^m$ then  the graph $\mathcal{S}(G)$ is an $n-$vertex
complete graph and so $\kappa(\mathcal{S}(Z_n)) = n-1$. To prove
the second part, we have to note that if an $n-$vertex graph
$\Gamma$ has exactly $s$ elements of degree $n-1$ then $\kappa(G)
\geq s$. In fact, we have to delete all such elements to find
a disconnected graph. Since  $n$ is not prime power and the
cyclic group $Z_n$ has exactly $\varphi(n)$ generators, it has at
least $\varphi(n) + 1$ elements of degree $n-1$. Therefore,
$\kappa(\mathcal{S}(Z_n)) \geq \varphi(n) + 1$. If the equality
is satisfied and $n$ is divisible by at least three different
primes then by deleting the identity and all
elements of order $n$ the resulting graph $H$ will be connected. To
prove, we first assume that $n$ is not a square free integer with exactly three prime factors. Choose two elements $g$ and $h$ in $H$ such that $o(g) = p_1^{\alpha_1}p_2^{\alpha_2}\cdots p_r^{\alpha_r}$,  $r \geq 3$, and   $o(h) =
q_1^{\beta_1}q_2^{\beta_2}\cdots q_t^{\beta_t}$, where $p_i$'s
and  $q_j$'s are prime numbers;  $\alpha_i$'s and $\beta_j$'s are
positive integers. If $u$ is an element of order $p_1q_1$ then
$(o(g),o(u)) = p_1$ and $(o(h),o(u)) = q_1$, there are paths
connecting $u$ and $g$ as well as $u$ and $h$. This implies that
there exists a path connecting $g$ and $h$ and so
$\kappa(\mathcal{S}(Z_n)) > \varphi(n) + 1$. So, it is enough to check the case that $n = pqr$. Since $Z_n$ has elements of orders $pq$, $pr$ and $qr$, all elements of the graph obtained from $\mathcal{S}(Z_n)$ by deleting the identity and all elements of order $pqr$ will be again connected and so
$\kappa(\mathcal{S}(Z_n)) > \varphi(n) + 1$.  Thus by our assumption, $n$  has exactly two prime factors, say $n = p^kq^l$,
where $p, q$ are distinct primes and $k, l$ are positive
integers. If $k \geq 2$ or $l \geq 2$ then by choosing an element
of order $pq$ and applying a similar argument as above, we can
see again the resulting graph will be connected. Therefore, $n =
pq$, where $p$ and $q$ are distinct prime numbers. Conversely, if $n=pq$ then
clearly $\kappa(\mathcal{S}(Z_n)) = \varphi(n) + 1$, which
completes the proof.
\end{proof}

The Kuratowski's theorem states that  a finite graph is planar if
and only if it does not contain a subgraph that is a subdivision
of the complete graph $K_5$ or of the complete bipartite graph
$K_{3,3}$. Here,  a subdivision of a graph $\Gamma$ is a graph
resulting from the subdivision of edges in $\Gamma$. In what follows,
we apply this theorem to give a classification of the planar order
supergraph of a finite group.

\begin{thm}\label{2.12}
The order supergraph of a finite group $G$ is planar  if and only
if $G \cong 1, Z_2, Z_3, Z_4, Z_2 \times Z_2$ or $S_3$.
\end{thm}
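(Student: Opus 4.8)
The plan is to prove this planarity classification in two directions. The forward direction (if $G$ is one of the listed groups, then $\mathcal{S}(G)$ is planar) is a finite verification: I would simply exhibit planar drawings or count vertices. For $1, Z_2, Z_3, Z_4$ the graph $\mathcal{S}(G)$ is complete on at most $4$ vertices by Theorem~\ref{thm0}, and $K_n$ is planar for $n \leq 4$, so these are immediate. For $Z_2 \times Z_2$, every non-identity element has order $2$, so $\mathcal{S}(G) = K_4$, again planar. For $S_3$, I would note that $\pi_e(S_3) = \{1,2,3\}$ with $\omega_2 = 3$ and $\omega_3 = 2$; using the $A$-join description $\mathcal{S}(G) = \Delta[K_1, K_3, K_2]$ from the proof of Theorem~\ref{2.5}, the identity is joined to all, the three involutions form a $K_3$, the two elements of order $3$ form a $K_2$, and there are no edges between the order-$2$ and order-$3$ cliques. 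This $6$-vertex graph has few edges and I would just check directly that it admits a planar embedding (one can verify it contains no $K_5$ or $K_{3,3}$ subdivision).

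The reverse direction is the substantive part and the main obstacle: I must show that every group \emph{not} on the list has non-planar order supergraph. My strategy is to find, in $\mathcal{S}(G)$, a forbidden Kuratowski subdivision whenever $G$ is large or has the wrong order structure. The cleanest uniform tool is Theorem~\ref{thm0}: if $G$ is a $p$-group of order at least $5$, then $\mathcal{S}(G) = K_{|G|}$ contains $K_5$, hence is non-planar. This immediately eliminates all $p$-groups except $1, Z_2, Z_3, Z_4$ and the two groups of order $4$ (both of which are on the list, since $Z_4$ and $Z_2 \times Z_2$ are exactly the order-$4$ groups). So after this step, the remaining groups all have $|\pi(G)| \geq 2$.

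For groups with at least two prime divisors, I would exploit the clique structure. The key observation is that the identity together with any non-identity element $x$ and all divisors-of-$o(x)$ create a $K_{1+\sum_{d \mid o(x)}\omega_d(G) - \cdots}$ type clique inside $\mathcal{S}(G)$; more usefully, if $G$ contains a cyclic subgroup $\langle x \rangle$ of order $n$ with $n \geq 4$, then $\mathcal{S}(\langle x \rangle) = K_n$ is a subgraph giving $K_5$ once $n \geq 5$. So I must control groups where every element has small order. Concretely, I would argue that if $|G| \geq 7$ and $G$ is not a $p$-group, then $\mathcal{S}(G)$ contains a $K_5$ or $K_{3,3}$: either some element has order $\geq 4$ (giving a large clique with the identity and its power-elements), or all elements have order in $\{1,2,3\}$. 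The hardest bookkeeping is this last case — groups of exponent dividing $6$ with multiple prime divisors — where $G$ must have several elements of orders $2$ and $3$. Here I would count $\omega_2(G)$ and $\omega_3(G)$: the identity is adjacent to everything, so it plus four order-$2$ elements (if $\omega_2 \geq 4$) yields $K_5$; and if both $\omega_2 \geq 3$ and $\omega_3 \geq 3$, one can extract a $K_{3,3}$ subdivision through the identity. A careful arithmetic case analysis then pins down that the only surviving group with $|\pi(G)| \geq 2$ is $S_3$ (where $\omega_2 = 3$, $\omega_3 = 2$, the minimal configuration), completing the classification. I expect the delicate point to be ruling out the small mixed-order groups (such as $Z_6$, $D_8$ realized differently, or $A_4$) by producing explicit forbidden subgraphs rather than by a slick general bound.
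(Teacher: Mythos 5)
Your overall strategy (Kuratowski's theorem, cliques coming from element orders, reduction to a finite check) is the same as the paper's, but two of your concrete steps in the reverse direction fail. The most serious is the claim that if $\omega_2(G)\geq 3$ and $\omega_3(G)\geq 3$ then one can extract a $K_{3,3}$ subdivision ``through the identity.'' This is false. In a group whose non-identity elements all have order $2$ or $3$, no order-$2$ element is adjacent to any order-$3$ element, so $\mathcal{S}(G)$ is exactly two cliques $K_{1+\omega_2(G)}$ and $K_{1+\omega_3(G)}$ sharing only the identity; the identity is a cut vertex. A subdivision of $K_{3,3}$ is $2$-connected, so it must lie inside a single block, i.e.\ inside one of the two cliques --- and a clique large enough to contain it already contains $K_5$. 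Concretely, every path from the order-$2$ side to the order-$3$ side passes through the single vertex $e$, so the nine internally disjoint paths of a $K_{3,3}$ subdivision cannot be routed. The conclusion you want is still true, but for a different reason: order-$3$ elements come in inverse pairs, so $\omega_3(G)$ is even; hence $\omega_3(G)\geq 3$ forces $\omega_3(G)\geq 4$, and the identity together with four order-$3$ elements is a $K_5$. This counting bound ($\omega_i(G)<4$ for $i=2,3,4$, straight from the $K_5$ obstruction) is exactly how the paper argues.

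The second gap is your first branch: ``some element has order $\geq 4$ gives a large clique with the identity and its power-elements.'' For an element of order exactly $4$ that clique is $\{e,x,x^2,x^3\}=K_4$, which is planar, so groups with $\pi_e(G)\subseteq\{1,2,3,4\}$ and two prime divisors (e.g.\ $S_4$) are not settled by anything in that branch. You need the further observation that $e$, all order-$2$ and all order-$4$ elements form one clique, giving $\omega_2(G)+\omega_4(G)\leq 3$ in the planar case, and then the residual possibility $\omega_4=2$, $\omega_2=1$ must be eliminated (a unique involution is central, and a central involution times an order-$3$ element has order $6$, producing the $K_5$ on $\{e,x,x^5,x^2,x^4\}$ inside $\langle x\rangle\cong Z_6$ --- the same computation you defer for $Z_6$ as a ``delicate point'' without doing it). The paper closes all of this uniformly: it first forces $\pi_e(G)\subseteq\{1,2,3,4\}$ (orders $6$ and $12$ give $K_5$), then uses $\omega_i(G)<4$ and $\omega_2(G)+\omega_4(G)<4$ to get $|G|\leq 10$, and finishes by checking all groups of order at most $10$. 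Your sketch needs these missing counting arguments, with the $K_{3,3}$ step replaced, before it is a proof.
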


\begin{proof}
Suppose the order supergraph of  $G$ is planar,  $n = Max\{ o(x) \
| \ x \in G\}$ and $x \in G$ is an element of order $n$. If  $5 |
n$ or $n$ is a prime number $\geq 5$ then  $\mathcal{S}(G)$ has a
subgraph isomorphic to $K_5$. On the other hand, $\langle
x\rangle$ has exactly $\varphi(n)$ generators and so
$\mathcal{S}(G)$ has a subgraph isomorphic to $K_{\varphi(n) +
1}$. Apply Kuratowski's theorem, we deduce that $n =
2^\alpha3^\beta$. We claim that $\alpha \leq 2$ and $\beta \leq
1$. Otherwise, $\mathcal{S}(G)$ has an induced subgraph
isomorphic to $K_5$, a contradiction. Thus,  $n \in \{ 1, 2, 3,
4,  6, 12\}$. If $G$ has an element of order $6$ or $12$ then we
will have a again a subgraph isomorphic to $K_5$ and so $\pi_e(G)
\subseteq \{ 1, 2, 3, 4\}$. Since elements of the same order in
$G$ constitute a clique in $\mathcal{S}(G)$, $\omega_i(G) < 4$,
when $i \in \{ 2, 3, 4\}$. Again since all elements of order $2$
are adjacent to all elements of order $4$, $\omega_2(G) +
\omega_4(G) < 4$. Since $\pi_e(G) \subseteq \{ 1, 2, 3, 4\}$ and
$\omega_i(G) < 4$, $2 \leq i \leq 4$, by counting the number of
elements of each order we have $|G| \leq$ $1 + 3 + 3 + 3 = 10$ and
a simple calculation by small group library of  GAP \cite{15}
proves that $G$ is isomorphic to $1, Z_2, Z_3, Z_4, Z_2 \times
Z_2$ or $S_3$. The converse is clear.
\end{proof}

By previous Theorem , the graph $\mathcal{S}(Z_n)$ is planar if
and only if $n < 5$. Suppose $\Gamma$ is a finite graph. The
clique number of $\Gamma$, $\omega(\Gamma)$, is the size of a
maximal clique in $\Gamma$ and the chromatic number of $\Gamma$,
$\chi(\Gamma)$, is the smallest number of colors needed to color
the vertices of $\Gamma$ so that no two adjacent vertices share
the same color. It is clear that $\chi(\Gamma)$ $\geq$
$\omega(\Gamma)$.

\begin{cor}
If the order supergraph of a finite group $G$ is planar then
$\chi(\mathcal{S}(G)) = \omega(\mathcal{S}(G))$.
\end{cor}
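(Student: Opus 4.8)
The plan is to lean entirely on the classification already obtained in Theorem \ref{2.12}. Once $\mathcal{S}(G)$ is planar, $G$ is forced to be one of the six groups $1, Z_2, Z_3, Z_4, Z_2\times Z_2$ or $S_3$, so the claimed equality only has to be verified on a finite and completely explicit list. Rather than handling each group in isolation, I would first observe that every group on this list is an $EPPO-$group: the groups $1, Z_2, Z_3, Z_4$ and $Z_2\times Z_2$ are $p-$groups, so all their element orders are prime powers, while $S_3$ has element orders $1, 2, 3$, again all prime powers.

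Next I would invoke the structure theorem for $EPPO-$groups established earlier in this section, which yields $\mathcal{S}(G) = K_1 + (\bigcup_{i=1}^k K_{|V_i|})$, a disjoint union of cliques joined to the single vertex corresponding to the identity. It then suffices to prove the purely graph-theoretic statement that any graph $\Gamma$ of the form $K_1 + (\bigcup_{i=1}^k K_{n_i})$ satisfies $\chi(\Gamma) = \omega(\Gamma)$.

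For this statement, note that any clique of $\bigcup_{i=1}^k K_{n_i}$ lies entirely inside a single block $K_{n_i}$, since distinct blocks carry no edges between them. Adjoining the apex vertex, which is adjacent to everything, a maximum clique of $\Gamma$ consists of the apex together with a largest block, so $\omega(\Gamma) = 1 + \max_i n_i$. For the chromatic number, assign the apex its own color and, inside each block $K_{n_i}$, use $n_i$ colors chosen from a common palette of size $\max_i n_i$; because distinct blocks share no edges these partial colorings never conflict, giving a proper coloring with $1 + \max_i n_i$ colors. Together with the general inequality $\chi(\Gamma) \geq \omega(\Gamma)$ recorded just before the corollary, this forces $\chi(\Gamma) = \omega(\Gamma) = 1 + \max_i n_i$, and hence $\chi(\mathcal{S}(G)) = \omega(\mathcal{S}(G))$.

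I do not expect a genuine obstacle here, since the heavy lifting is already done by Theorem \ref{2.12}; the only mild points to get right are checking that all six groups really are $EPPO-$groups (so that the structure theorem applies) and confirming that the apex vertex forces exactly one extra color beyond the largest block. As a sanity check one can compute the six cases directly: $\mathcal{S}(G)$ equals $K_1, K_2, K_3, K_4, K_4$ for the first five groups and $K_1 + (K_3 \cup K_2)$ for $S_3$, giving $\chi = \omega$ equal to $1, 2, 3, 4, 4, 4$ respectively, which confirms the corollary.
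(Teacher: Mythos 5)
Your proof is correct. The paper itself gives no argument for this corollary: it is stated immediately after Theorem \ref{2.12} and is implicitly left as a direct inspection of the six planar cases, for which $\mathcal{S}(G)$ is $K_1$, $K_2$, $K_3$, $K_4$, $K_4$ and $K_1+(K_3\cup K_2)$ respectively --- exactly your closing ``sanity check.'' What you do differently is to unify that case check: you observe that all six groups are $EPPO$-groups, invoke the paper's structure theorem $\mathcal{S}(G)=K_1+(\bigcup_{i=1}^{k}K_{|V_i|})$, and then prove the clean graph-theoretic fact that any apex joined to a disjoint union of cliques has $\chi=\omega=1+\max_i |V_i|$. This buys a conceptual explanation rather than a brute-force verification, and the coloring argument is airtight (cliques cannot straddle two blocks, and reusing one palette of size $\max_i|V_i|$ across blocks works because blocks carry no edges). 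It is also worth noting that the paper later proves, via the strong perfect graph theorem (Theorem \ref{cho}), that $\mathcal{S}(G)$ is perfect for \emph{every} finite group, which subsumes this corollary without any planarity hypothesis; your route is more elementary in that it avoids that deep theorem, at the cost of being confined to the planar case. Two minor points to tidy: for $G=1$ the index set in the $EPPO$ decomposition is empty, so one should either treat $\mathcal{S}(1)=K_1$ separately or read $\max_i|V_i|$ over the empty set as $0$; and strictly speaking the identity has order $1$, which one must agree to count as a prime power for the trivial group to qualify as an $EPPO$-group.
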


\begin{thm}
The proper order supergraph of a finite group $G$ is planar if and
only if $G \cong 1, Z_2, Z_3, Z_4, Z_2 \times Z_2, Z_5, Z_6$ or
$S_3$.
\end{thm}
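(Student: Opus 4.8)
The plan is to re-run the Kuratowski argument of Theorem \ref{2.12}, but tracking carefully that deleting the identity shrinks every clique by one vertex; this single vertex is exactly what lets $Z_5$ and $Z_6$ enter the list. Throughout I use that elements of equal order form a clique in $\mathcal{S}^\ast(G)$ and that the element sets of two orders $a,b$ are completely joined whenever $a\mid b$.

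For the forward direction, assume $\mathcal{S}^\ast(G)$ is planar and fix an element $x$ of maximal order $n$. Its $\varphi(n)$ generators all have order $n$, so they span a $K_{\varphi(n)}$; planarity forces $\varphi(n)\le4$, whence every element order lies in $\{1,2,3,4,5,6,8,10,12\}$. I then eliminate $n\in\{8,10,12\}$: inside $\langle x\rangle$ the elements of orders $n$ and $n/2$ are completely joined (as $n/2\mid n$) and number $\varphi(n)+\varphi(n/2)\ge5$, giving a $K_5$. Hence $\pi_e(G)\subseteq\{1,2,3,4,5,6\}$.

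Next I read off clique bounds from the divisibility pattern on $\{2,3,4,5,6\}$: the only nontrivial relations are $2\mid4$, $2\mid6$, $3\mid6$, with $5$ isolated. Thus the element sets of orders $\{2,4\}$, $\{2,6\}$, $\{3,6\}$ and $\{5\}$ are cliques, so avoiding $K_5$ forces $\omega_2(G)+\omega_4(G)\le4$, $\omega_2(G)+\omega_6(G)\le4$, $\omega_3(G)+\omega_6(G)\le4$ and $\omega_5(G)\le4$. In particular $\omega_i(G)\le4$ for $2\le i\le6$, so $|G|\le 1+5\cdot4=21$. A finite inspection of the groups of order at most $21$ (e.g.\ via the small groups library of GAP, exactly as in Theorem \ref{2.12}) then leaves only $1, Z_2, Z_3, Z_4, Z_2\times Z_2, Z_5, Z_6, S_3$: every other candidate dies either by an out-of-range element order or by one of the clique bounds (for instance $D_{10}$ has five involutions, $A_4$ has eight elements of order $3$, and $Z_{12}$ an element of order $12$). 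For the converse I would simply display $\mathcal{S}^\ast(G)$ in each of the eight cases: it is empty, $K_1$, $K_2$, $K_3$, $K_3$, $K_4$; for $Z_6$ it is $K_5$ with the two edges at its unique order-$2$ vertex deleted; and for $S_3$ it is the disjoint union $K_3\cup K_2$ — all manifestly planar.

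The main obstacle is the bookkeeping in the reduction rather than any single deep step: I must check that the clique bounds are simultaneously tight enough to force $|G|$ finite and loose enough to retain precisely the right survivors, and that the order-$\le 21$ verification correctly separates the planar groups (such as $Z_6$ and $S_3$) from the near-misses (such as $D_{10}$, $A_4$, $Z_{12}$), where a single extra divisibility relation or a marginally larger $\omega_i(G)$ reinstates a $K_5$ or a $K_{3,3}$.
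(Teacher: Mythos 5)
Your proposal is correct and follows essentially the same route as the paper: bound element orders via the clique of generators (forcing $\varphi(n)\le 4$), eliminate $8,10,12$ by exhibiting a $K_5$ inside $\langle x\rangle$, impose the clique bounds $\omega_2+\omega_4\le 4$, $\omega_2+\omega_6\le 4$, $\omega_3+\omega_6\le 4$ to get $|G|\le 21$, and finish by inspecting the small groups library. Your only departures are cosmetic improvements — the uniform $\varphi(n)+\varphi(n/2)\ge 5$ trick handling $8,10,12$ in one stroke, and an explicit description of $\mathcal{S}^\ast(G)$ for the eight surviving groups where the paper merely says the converse is obvious.
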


\begin{proof}
Suppose $\mathcal{S}^\star(G)$ is planar, $n = Max\{ o(g) \ | \ g
\in G\}$ and $x \in G$ has order $n$. Since $\langle x \rangle$
has exactly $\varphi(n)$ elements of order $n$ and these elements
constitute a clique in $\mathcal{S}^\star(G)$, $\varphi(n) < 5$.
On the other hand, if $n$ is a prime power and $n \geq 7$ then
$\mathcal{S}^\star(G)$ has a clique isomorphic to $K_5$ which is
impossible. Thus $\pi(G) \subseteq \{ 2, 3, 5\}$ and $\pi_e(G)
\subseteq \{ 1, 2, 3, 4, 5, 6, 10, 12\}$. If $G$ has an element
$x$ of order $10$ or $12$ then the set $A$ containing all
generators of $\langle x \rangle$ and $x^2$ will be a clique of
order $5$ which leads us to another contradiction. This shows
that $\pi_e(G) \subseteq \{ 1, 2, 3, 4, 5, 6\}$. Since elements
of the same order is a clique in the planar graph
$\mathcal{S}^\star(G)$, $\omega_i(G) \leq 4$, for $i \leq 6$.
Finally, the elements of orders $2, 4$, the elements of orders $2,
6$ and the elements of orders $3, 6$ constitute three cliques in
$\mathcal{S}^\star(G)$ and so $\omega_2(G) + \omega_4(G) \leq 4$,
$\omega_2(G) + \omega_6(G) \leq 4$ and $\omega_3(G) + \omega_6(G)
\leq 4$. Since $\pi_e(G) \subseteq \{ 1, 2, 3, 4, 5, 6\}$ and $\omega_i(G) \leq 4$, $2 \leq i \leq 6$, by counting the number of elements of each order $|G| \leq$ 1 + 4 + 4 + 4 + 4 + 4 = 21 and a simple calculation by small
group library of  GAP proves that $G \cong 1, Z_2, Z_3, Z_4, Z_2
\times Z_2, Z_5, Z_6$ or $S_3$. The converse is obvious.
\end{proof}

Suppose $G$ is a finite group and $S(G)$ is the set of   all
elements $x \in G$ such that $x$ is adjacent to all elements of
$G \setminus \{ x\}$ in $\mathcal{S}(G)$. In the following
theorem, the groups $G$ with $|S(G)| > 1$ is characterized.

\begin{thm}
$|S(G)| > 1$ if and only if $G$ is a non-trivial  full exponent finite group.
\end{thm}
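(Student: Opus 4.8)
The plan is to first reinterpret the set $S(G)$ purely in terms of element orders. Since any two elements of the same order are automatically adjacent in $\mathcal{S}(G)$, a vertex $x$ lies in $S(G)$ precisely when $o(x)$ is comparable, under the divisibility relation, to every order occurring in $\pi_e(G)$; that is, for each $d \in \pi_e(G)$ either $d \mid o(x)$ or $o(x) \mid d$. Because a suitable power of $x$ realises every divisor of $o(x)$, the set $\pi_e(G)$ is closed under divisors, a fact I would record for later use. I would also note that the identity always belongs to $S(G)$, so the hypothesis $|S(G)| > 1$ is equivalent to the existence of a non-identity element $x$ whose order is comparable to every element order (and in particular forces $G$ to be non-trivial).

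For the easy direction, suppose $G$ is a non-trivial full exponent group and choose $x$ with $o(x) = Exp(G)$. Every element order divides $Exp(G) = o(x)$, so $x$ is adjacent to all other vertices and hence $x \in S(G)$; as $G$ is non-trivial, $o(x) > 1$ and $x \neq 1$. Together with the identity this yields $|S(G)| \geq 2$, as required.

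The substance of the proof is the converse. Given a non-identity $x \in S(G)$, the goal is to exhibit an element of order $Exp(G)$. For each prime $p \in \pi(G)$ let $p^{a_p}$ be the largest $p$-power dividing any element order, so that $a_p = v_p(Exp(G))$; an appropriate power of an element realising this maximum has order exactly $p^{a_p}$, whence $p^{a_p} \in \pi_e(G)$. Since $x \in S(G)$, its order is comparable to each $p^{a_p}$, and the main step is to argue that $p^{a_p} \mid o(x)$ for every such $p$. Once this is established for all $p \in \pi(G)$, taking the product gives $Exp(G) \mid o(x)$; as $o(x) \mid Exp(G)$ always holds, we conclude $o(x) = Exp(G)$ and $G$ is full exponent.

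The main obstacle is the alternative possibility $o(x) \mid p^{a_p}$, which would make $o(x)$ a power of $p$. I would dispose of this by a case split on $|\pi(G)|$. If $G$ has a second prime divisor $q$, then $o(x)$ must also be comparable to an element of order $q$, which is impossible for a non-trivial $p$-power; hence the only way $o(x) \mid p^{a_p}$ can occur is when $G$ is a $p$-group, and in that case $\pi_e(G)$ is totally ordered and $Exp(G) = p^{a_p} \in \pi_e(G)$ already, so $G$ is full exponent (consistently with Theorem \ref{thm0}). This is exactly the subtlety the statement accommodates: in a $p$-group the witnessing element $x$ need not itself attain the exponent, so one cannot naively claim $o(x) = Exp(G)$ in all cases, and the case analysis is what reconciles this with the conclusion.
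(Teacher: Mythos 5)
Your proof is correct and takes essentially the same route as the paper's: both treat the prime-power-order case separately (where full exponent is automatic), and otherwise use Cauchy's theorem plus comparability to show the witnessing element cannot have prime-power order, forcing its order to be divisible by each maximal prime-power element order and hence to equal $Exp(G)$. The only cosmetic difference is that you organize the divisibility step around the $p$-parts $p^{a_p}$ of the exponent, whereas the paper first shows every prime divisor of $|G|$ divides $o(g)$ and then that every prime-power element order divides $o(g)$.
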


\begin{proof}
Suppose  $|S(G)| > 1$ and $e \ne g \in S(G)$. If $G$  is a
$p-$group then $G$ is full exponent, as desired. We assume that
$G$ does not have prime power order and $p$ is a prime factor of
$|G|$. Then obviously $p | o(g)$ and so $o(g)$ is divisible by
all prime factors of $|G|$. Choose an element $x \in G$ of order
$p^r$, where $p$ is a prime number and $r$ is a non-negative
integer. Since $deg(g) = |G| - 1$, $g$ and $x$ are adjacent. So,
$o(x) | o(g)$ or $o(g) | o(x)$. By our assumption, $G$ does not
have prime power order and hence $o(x) | o(g)$. This shows that
$o(g) = Exp(G)$ and therefore $G$ is full exponent.

Conversely, we assume that $G$ is full exponent and $a$ is  an
element of $G$ such that $o(a) = Exp(G)$. Thus $a$ is adjacent to
all elements of $G \setminus \{ a\}$. This proves that $|S(G)| >
1$, proving the result.
\end{proof}

\begin{cor}
If $G$ is isomorphic a nilpotent group, a dihedral group  of
order $2n$ with even $n$, $H \times Z_n$ such that $H$ is
arbitrary finite group and $n = Exp(H)$ or $H^n$ with $n =
|\pi(H)|$ then $|S(G)| > 1$.
\end{cor}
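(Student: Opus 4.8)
The plan is to invoke the preceding theorem, which asserts that $|S(G)| > 1$ precisely when $G$ is a non-trivial full exponent group. Thus for each of the four families it suffices to exhibit, under the standing assumption that the group in question is non-trivial, an element whose order equals $Exp(G)$; everything then reduces to elementary order computations in direct products. Each case is handled independently.

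First I would dispose of the two easy cases. If $G$ is nilpotent, then $G$ is the internal direct product of its Sylow subgroups $P_1, \dots, P_k$, and since these have pairwise coprime orders one has $Exp(G) = \prod_i Exp(P_i)$. Choosing an element $y_i \in P_i$ of maximal order $Exp(P_i)$ (such an element exists in any finite $p$-group) and forming the product $y_1 \cdots y_k$, whose factors commute, yields an element of order $\prod_i Exp(P_i) = Exp(G)$. For the dihedral group $D_{2n}$ with $n$ even, the element orders are the divisors of $n$ together with $2$, so $Exp(D_{2n}) = \mathrm{lcm}(n,2) = n$, and the rotation $r$ already has order $n = Exp(D_{2n})$. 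Next I would treat $G = H \times Z_n$ with $n = Exp(H)$: here $Exp(G) = \mathrm{lcm}(Exp(H), n) = n$, and the element $(1_H, t)$ with $t$ a generator of $Z_n$ has order $n = Exp(G)$.

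The most substantial case, and the one I expect to be the real obstacle, is $G = H^n$ with $n = |\pi(H)|$, since $H$ itself need not be full exponent, so no single coordinate can be expected to realize $Exp(H)$. The idea is to distribute the prime powers across the $n$ coordinates. Writing $\pi(H) = \{p_1, \dots, p_n\}$ and letting $p_i^{a_i}$ be the exact power of $p_i$ dividing $Exp(H)$, I would select for each $i$ an element $x_i \in H$ of order $p_i^{a_i}$, obtained from an element of $H$ whose order is divisible by $p_i^{a_i}$ by passing to a suitable power. Since $Exp(H^n) = Exp(H) = \prod_i p_i^{a_i}$ and the $p_i^{a_i}$ are pairwise coprime, the element $(x_1, \dots, x_n) \in H^n$ has order $\mathrm{lcm}(p_1^{a_1}, \dots, p_n^{a_n}) = \prod_i p_i^{a_i} = Exp(G)$.

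This last step is exactly where the hypothesis $n = |\pi(H)|$ is indispensable: having one coordinate available per prime divisor is precisely what makes the simultaneous placement of all the prime powers possible, whereas with fewer copies one could not in general assemble a single element of order $Exp(H)$. In each of the four cases the resulting element of order $Exp(G)$ witnesses that $G$ is full exponent, and the preceding theorem then yields $|S(G)| > 1$, completing the argument.
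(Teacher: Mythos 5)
Your proof is correct and takes exactly the route the paper intends: the corollary is stated there without proof, as an immediate application of the preceding theorem characterizing $|S(G)|>1$ via non-trivial full exponent groups. Your case-by-case verification that each family is full exponent --- including the coordinate-wise assembly of an element of order $Exp(H)$ in $H^n$ using one copy of $H$ per prime in $\pi(H)$, which is the only non-obvious case --- supplies precisely the details the paper omits.
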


\begin{thm}
The order supergraph of a finite group is bipartite if and only if $G \cong 1, Z_2$.
\end{thm}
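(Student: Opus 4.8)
I would prove this by exploiting the fact that a bipartite graph contains no odd cycle, and in particular no triangle, together with the observation (already used implicitly in the excerpt) that the identity $e$ has order $1$ and hence is adjacent in $\mathcal{S}(G)$ to \emph{every} other vertex. Thus $e$ is a universal vertex, and the plan is to show that the presence of any triangle forces a contradiction unless $G \cong 1$ or $Z_2$.

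The main direction is: assume $\mathcal{S}(G)$ is bipartite and deduce the structure of $G$. Because $e$ is adjacent to all other vertices, any edge between two non-identity elements $a,b$ would complete a triangle $\{e,a,b\}$; so bipartiteness forces the non-identity vertices to be pairwise non-adjacent. First I would observe that two distinct elements of the same order are always adjacent in $\mathcal{S}(G)$ (since $o(x)\mid o(x)$), so $G$ can have at most one element of each non-identity order. Next, if $G$ had an element $x$ of order $n\ge 3$, then $\langle x\rangle$ would contain $\varphi(n)\ge 2$ generators, all of order $n$ and hence mutually adjacent, producing a triangle with $e$. This rules out all orders $\ge 3$, so every non-identity element of $G$ has order $2$; in particular $G$ is an elementary abelian $2$-group $Z_2^{k}$.

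To finish, I would rule out $k\ge 2$: if $|G|\ge 4$ then $G$ contains at least three involutions, any two of which are adjacent in $\mathcal{S}(G)$ (same order) and again form a triangle with $e$. Hence $k\le 1$, giving $G\cong 1$ or $G\cong Z_2$. The converse is immediate, since $\mathcal{S}(1)$ is a single vertex and $\mathcal{S}(Z_2)\cong K_2$, both of which are bipartite.

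I do not expect a serious obstacle here; the whole argument reduces to the triangle-detection principle for bipartite graphs combined with the counting facts $\varphi(n)\ge 2$ for $n\ge 3$ and $\omega_2(Z_2^k)\ge 3$ for $k\ge 2$. The only point requiring a little care is justifying that ``order $\le 2$ for all elements'' yields the elementary abelian structure, which follows from the standard fact that a group in which every element satisfies $x^2=e$ is abelian; but even this is unnecessary, as the three-involution count can be applied directly once one knows $|G|\ge 4$.
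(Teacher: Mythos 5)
Your proof is correct, and it takes a somewhat different route from the paper's, in a way worth comparing. The paper argues in two steps: if $|G|$ has an odd prime factor $p$, then (by Cauchy's theorem) there is an element $x$ of order $p$, and $\langle x \rangle$ induces a subgraph isomorphic to $K_p$ with $p \geq 3$, which destroys bipartiteness; hence $G$ must be a $2$-group, and the paper then invokes Theorem~\ref{thm0} (that $\mathcal{S}(G)$ is complete exactly when $G$ is a $p$-group) to conclude $\mathcal{S}(G) = K_{|G|}$, which is bipartite only when $|G| \leq 2$. You instead make the universality of the identity the engine of the entire argument: any edge between two non-identity vertices closes a triangle through $e$, so you exclude elements of order $n \geq 3$ via the $\varphi(n) \geq 2$ generators of $\langle x \rangle$, and then exclude $|G| \geq 4$ by counting involutions, which are pairwise adjacent. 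Your version is more self-contained — it needs neither Cauchy's theorem nor Theorem~\ref{thm0}, and, as you observe, not even the fact that a group of exponent $2$ is elementary abelian, since the raw count of non-identity elements suffices — whereas the paper's proof is shorter because it reuses machinery already established earlier in the paper. Both arguments ultimately rest on the same principle, that a bipartite graph contains no triangle, so the difference is one of decomposition and dependencies rather than of fundamental idea.
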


\begin{proof}
If $|G|$ has an odd prime factor $p$ and $x \in G$ has order
$p$, then $\langle x \rangle$ is a subgraph isomorphic to $K_p$
which is impossible. Thus $G$ is a $2-$group and by Theorem
\ref{thm0}, $G$ is complete. This shows that $n \leq 2$, as
desired.
\end{proof}

\begin{cor}
The order supergraph of a finite group is a tree if and only if $G \cong 1, Z_2$.
\end{cor}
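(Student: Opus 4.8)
The plan is to obtain this corollary as an immediate consequence of the preceding bipartite characterization theorem, exploiting the elementary graph-theoretic fact that every tree is bipartite. The whole argument is a reduction, so I would not attempt any new group-theoretic analysis.

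For the converse I would simply verify the two listed groups directly. If $G \cong 1$ then $\mathcal{S}(G)$ is the single vertex $K_1$, which is vacuously a tree; if $G \cong Z_2$ then $\mathcal{S}(G)$ is the single edge $K_2$ joining the identity to the unique involution, which is again a tree. Hence both groups do yield trees.

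For the forward implication, suppose $\mathcal{S}(G)$ is a tree. A tree is by definition a connected acyclic graph, and an acyclic graph contains no cycle at all, in particular no odd cycle, so it is two-colorable; thus every tree is bipartite. Consequently $\mathcal{S}(G)$ is bipartite, and the previous theorem immediately forces $G \cong 1$ or $G \cong Z_2$, which is exactly what we want.

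The only substantive input here is the bipartite theorem already proved in the excerpt; the tree hypothesis is in fact strictly stronger than bipartiteness. Indeed, it was noted earlier that $\mathcal{S}(G)$ is always connected, so requiring $\mathcal{S}(G)$ to be a tree only adds the acyclicity condition on top of connectivity. For this reason there is essentially no obstacle: the content of the corollary is entirely carried by the implication ``tree $\Rightarrow$ bipartite,'' and no further casework on $G$ is required.
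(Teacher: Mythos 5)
Your proof is correct and follows exactly the route the paper intends: the corollary is stated immediately after the bipartite characterization precisely because a tree is bipartite, so the forward direction reduces to that theorem, while $K_1$ and $K_2$ handle the converse. Nothing further is needed.
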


\begin{thm}
Let $G$ be a non-trivial finite group. Then  $\mathcal{S}^\ast(G)$ is bipartite if and only if $G \cong  Z_2$ or $Z_3$.
\end{thm}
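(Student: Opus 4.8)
The plan is to treat the two implications separately, using throughout that a bipartite graph is triangle--free and that, as already exploited in the proof of Theorem~\ref{2.5}, the elements of a fixed order $i>1$ induce a complete subgraph $K_{\omega_i(G)}$ of $\mathcal{S}^\ast(G)$. For the easy direction I would just exhibit the two graphs: $\mathcal{S}^\ast(Z_2)$ is a single isolated vertex and $\mathcal{S}^\ast(Z_3)$ is the edge $K_2$ joining the two elements of order $3$, and both are plainly bipartite.

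For the forward direction, assume $\mathcal{S}^\ast(G)$ is bipartite, hence contains no $K_3$. First I would bound the element orders. If $\omega_i(G)\ge 3$ for some $i>1$, the same--order clique contains a triangle, so $\omega_i(G)\le 2$ for every $i>1$. Since the elements of order $i$ partition into cyclic subgroups of order $i$, each furnishing exactly $\varphi(i)$ generators, any order $i$ actually occurring in $G$ satisfies $\varphi(i)\le\omega_i(G)\le 2$; as $\varphi(i)\le 2$ only for $i\in\{1,2,3,4,6\}$, this yields $\pi_e(G)\subseteq\{1,2,3,4,6\}$.

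The crucial step is to discard the composite orders $4$ and $6$. These are the delicate cases, because for them $\varphi=2$, so the same--order clique is only a $K_2$ and no triangle appears among elements of equal order; one must instead build a triangle by mixing orders through divisibility. Concretely, if $a$ has order $4$ then $\{a,a^3,a^2\}$ is a triangle, the first two having order $4$ and $a^2$ having order $2\mid 4$; if $a$ has order $6$ then $\{a,a^5,a^3\}$ is a triangle, of orders $6,6,2$ with $2\mid 6$. Each contradicts triangle--freeness, so $\pi_e(G)\subseteq\{1,2,3\}$. I expect this composite--order step to be the main obstacle: it is precisely the point where $\mathcal{S}^\ast(G)$ differs from $\mathcal{S}(G)$ (whose identity vertex destroys bipartiteness sooner) and the reason $Z_3$ survives while $Z_4$ and $Z_6$ do not.

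Finally, $\pi_e(G)\subseteq\{1,2,3\}$ together with $\omega_2(G),\omega_3(G)\le 2$ forces $|G|=1+\omega_2(G)+\omega_3(G)\le 5$, and I would close with a short structural inspection rather than a blind enumeration: if $\pi_e(G)=\{1,2\}$ then $G$ has exponent $2$, so $G\cong Z_2^{\,k}$ with $\omega_2(G)=2^k-1\le 2$, giving $G\cong Z_2$; if $\pi_e(G)=\{1,3\}$ then $G$ has exponent $3$ and at most one subgroup of order $3$, giving $G\cong Z_3$; and orders $2$ and $3$ cannot coexist, since that would require $6\mid|G|$ contrary to $|G|\le 5$. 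Hence $G\cong Z_2$ or $Z_3$, as claimed.
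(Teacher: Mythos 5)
Your proposal is correct and takes essentially the same approach as the paper: bipartiteness forces triangle-freeness, triangles built inside a cyclic subgroup (two generators plus a third non-identity power) exclude all element orders $\geq 4$, the same-order cliques give $\omega_2(G),\omega_3(G)\leq 2$, hence $|G|\leq 5$ and $G\cong Z_2$ or $Z_3$. The only cosmetic differences are that the paper kills every order $\geq 4$ in one stroke rather than via your $\varphi(i)\leq 2$ detour through the cases $i=4,6$, and it finishes by inspecting the groups of order at most $5$ instead of your exponent argument.
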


\begin{proof}
Suppose $\mathcal{S}^\ast(G)$ is bipartite and $G$ has an
element $x$ of order $\geq 4$. It is well-know that if $n \geq 4$ then $\varphi(n) \geq 2$. Hence the cyclic group $\langle x
\rangle$ has at least two generators $x$, $x^t$, where $t > 1$. Choose an element $x^j$ different from $x$ and $x^t$ and so $j \ne 1, t$. Since $o(x^j) | o(x) = o(x^t)$, $x, x^j, x^t, x$ is a cycle of length $3$, contradicts
by bipartivity of $\mathcal{S}^\ast(G)$. Thus $\pi_e(G) \subseteq
\{ 1, 2, 3\}$. On the other hand, since all element of a given
order constitute a clique in $\mathcal{S}^\ast(G)$, $\omega_i(G) \leq 2$, $i = 2, 3$. This implies that $|G| \leq 5$.
Therefore, $G \cong Z_2$ or $Z_3$.
\end{proof}

\begin{cor}
The graph  $\mathcal{S}^\ast(G)$ is tree if and only if $G \cong  Z_2$ or $Z_3$.
\end{cor}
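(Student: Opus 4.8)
The plan is to obtain this corollary directly from the preceding theorem, exploiting the elementary fact that every tree is bipartite. Recall that a tree is a connected acyclic graph; since it contains no cycle whatsoever, it certainly contains no odd cycle, and is therefore bipartite. Consequently the forward implication requires no new work: if $\mathcal{S}^\ast(G)$ is a tree, then in particular it is bipartite, and the previous theorem immediately forces $G \cong Z_2$ or $Z_3$.

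For the converse I would simply verify that each of these two groups yields a tree. When $G \cong Z_2$, deleting the identity from $\mathcal{S}(Z_2)$ leaves only the single involution, so $\mathcal{S}^\ast(Z_2) \cong K_1$, the one-vertex tree. When $G \cong Z_3$, the two non-identity elements both have order $3$ and are hence adjacent in the order supergraph; deleting the identity therefore leaves a single edge, so that $\mathcal{S}^\ast(Z_3) \cong K_2$, which is again a tree. This settles both directions.

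The whole content of the statement is carried by the classification of groups with bipartite proper order supergraph, so there is no genuine obstacle here. The only points demanding a little care are to invoke the standard fact that trees are bipartite (so that the theorem may legitimately be applied in the forward direction) and to confirm, by the direct computation above, that the two admissible groups produce honest trees rather than merely bipartite graphs that happen to contain a cycle.
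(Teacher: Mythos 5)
Your proposal is correct and matches the paper's intent exactly: the paper states this as an immediate corollary of the preceding bipartiteness theorem (offering no separate proof), and the intended argument is precisely yours --- a tree has no odd cycle, hence is bipartite, so the theorem gives the forward direction, while the converse is the direct check that $\mathcal{S}^\ast(Z_2) \cong K_1$ and $\mathcal{S}^\ast(Z_3) \cong K_2$ are trees.
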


Suppose $\Gamma$ is a connected graph with exactly $n$ vertices and $m$ edges. If $c = m - n + 1$ then $\Gamma$
is called $c-$cyclic. It is easy to see that $c=0$ if and only if $\Gamma$ is a tree. In the case that
$c = 1, 2, 3, 4$ and $5$, we call $\Gamma$ to be unicyclic, bicyclic, tricyclic, tetracyclic and pentacyclic, respectively.

\begin{lem}\label{thm35}
The order supergraph $\mathcal{S}(G)$ has a pendant vertex if and only if $|G| \leq 2$.
\end{lem}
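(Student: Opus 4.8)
The plan is to prove the substantive (forward) direction by contradiction and to dispatch the converse by direct inspection. For the converse, if $|G| = 2$ then $\mathcal{S}(G) \cong K_2$, whose two vertices each have degree $1$, so a pendant vertex certainly exists; the case $|G| = 1$ is degenerate, since $K_1$ consists of a single isolated vertex. All the work lies in showing that $|G| > 2$ forces every vertex of $\mathcal{S}(G)$ to have degree at least $2$, so that no pendant vertex can arise.

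First I would record the one structural fact driving everything: because $o(e) = 1$ divides every element order, the identity is adjacent in $\mathcal{S}(G)$ to all other vertices, so $deg(e) = |G| - 1$. Consequently, if the pendant vertex were $e$ itself we would get $|G| - 1 = 1$, i.e.\ $|G| = 2$ at once. So I may assume the pendant vertex $v$ is non-identity; it is automatically adjacent to $e$, and, being pendant, it has no other neighbour. The goal is now to contradict $|G| > 2$.

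Next I would split on $m = o(v)$, exploiting that every element whose order divides $m$ is adjacent to $v$. If $m$ is composite, a prime divisor $p < m$ of $m$ yields an element $w \in \langle v \rangle$ of order $p$ with $w \ne e, v$ and $o(w) \mid o(v)$, hence a second neighbour of $v$, a contradiction. If $m$ is an odd prime, then $\langle v \rangle$ contains $\varphi(m) = m - 1 \ge 2$ elements of order $m$, so $v$ has a same-order neighbour distinct from $e$, again a contradiction. Thus the only surviving possibility is $m = 2$, i.e.\ $v$ is an involution.

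The main obstacle is precisely this involution case, which I would resolve by a short group-theoretic argument rather than a graph-theoretic one. Since $v$ is pendant, $G$ has no other element of order $2$ (it would be a same-order neighbour) and no element of even order exceeding $2$ (such a $z$ satisfies $2 \mid o(z)$, so $v \sim z$). Uniqueness of the involution forces $v$ to be central, as every conjugate of $v$ is again an involution and hence equals $v$. I would then pin down the Sylow $2$-subgroup: its non-identity elements have $2$-power order, necessarily $\le 2$ here, so it is elementary abelian, and order $\ge 4$ would manufacture extra involutions; therefore it has order $2$ and $|G| = 2m$ with $m$ odd. If $m > 1$, Cauchy's theorem provides an element $b$ of some odd prime order $q$; since $v$ is central and $\gcd(2, q) = 1$, the product $vb$ has order $2q > 2$ with $o(v) = 2 \mid 2q = o(vb)$, so $v \sim vb$, contradicting that $v$ is pendant. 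Hence $m = 1$ and $|G| = 2$. The delicate points to get right are the centrality of the unique involution and the order computation for $vb$, but both become routine once all even-order elements beyond $v$ have been excluded.
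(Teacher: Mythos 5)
Your proof is correct, and in fact it is more complete than the paper's own argument. Both proofs begin the same way: the identity has degree $|G|-1$, and a pendant vertex $x \neq e$ cannot have $o(x) > 2$, since $\langle x \rangle$ then supplies a second neighbour of $x$ besides $e$ (the paper does this in one stroke; your split into ``composite order'' and ``odd prime order'' is a finer version of the same observation, and could be shortened by just taking $x^{-1} \neq x$). The genuine difference is in the involution case. The paper disposes of it with the sentence ``Hence $G$ is elementary abelian and in such a case all involutions are adjacent,'' but that deduction is not justified: pendancy of an involution $x$ only excludes \emph{other elements of even order}, because odd-order elements are simply non-adjacent to $x$ in $\mathcal{S}(G)$, so nothing at the graph level forces $G$ to be a $2$-group. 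You close exactly this gap with a group-theoretic argument: the unique involution is central, the Sylow $2$-subgroup must equal $\{e,x\}$, hence $|G| = 2m$ with $m$ odd, and if $m>1$ Cauchy's theorem yields $b$ of odd prime order $q$ with $o(xb)=2q$, a forbidden even-order neighbour of $x$. (One could instead invoke Burnside's normal $p$-complement theorem or Cayley's embedding, but your centrality-plus-Cauchy route is elementary.) The only shared blemish is the trivial group: $\mathcal{S}(1)=K_1$ has no pendant vertex, so the ``if'' direction of the lemma really holds only for $|G|=2$; like the paper, you flag this case rather than contradict it, which is a defect of the lemma's formulation, not of your proof.
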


\begin{proof}
Suppose $|G| > 1$ and $x$ is a non-identity element such that $deg_{\mathcal{S}(G)}(x) = 1$. If $o(x) > 2$
then there exists $y \in \langle x \rangle$ such that $xy, xe \in E(\mathcal{S}(G))$ which is impossible.
Hence $G$ is elementary abelian and in such a case all involutions are adjacent. This shows that $G \cong Z_2$,
proving the lemma.
\end{proof}

\begin{thm} Let $G$ be a finite group. The following are hold:
\begin{enumerate}
\item The order supergraph $\mathcal{S}(G)$ is unicyclic if and only if $G \cong Z_3$.

\item The order supergraph $\mathcal{S}(G)$ cannot be bicyclic and pentacyclic.

\item The order supergraph $\mathcal{S}(G)$ is tricyclic if and only if $G \cong Z_4$ or $Z_2 \times Z_2$.

\item The order supergraph $\mathcal{S}(G)$ is tetracyclic if and only if $G \cong D_6$.
\end{enumerate}
\end{thm}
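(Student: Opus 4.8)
The plan is to replace the cyclomatic number $c = |E(\mathcal{S}(G))| - |G| + 1$ by a quantity that does not see the identity. Since $o(1)=1$ divides every element order, the identity is adjacent to all of the other $|G|-1$ vertices of $\mathcal{S}(G)$, and deleting it destroys precisely these edges; hence
\[
|E(\mathcal{S}(G))| = (|G|-1) + |E(\mathcal{S}^\ast(G))|,
\]
so that $c = |E(\mathcal{S}^\ast(G))|$. As $\mathcal{S}(G)$ is connected of diameter at most two, the four parts amount to describing the groups whose proper order supergraph has exactly $1$, $3$ or $4$ edges, and to showing that $2$ and $5$ edges cannot occur.

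Next I would bound $|G|$. For each $n\in\pi_e(G)$ the $\varphi(n)$ elements of order $n$ induce a clique $K_{\varphi(n)}$ in $\mathcal{S}^\ast(G)$, contributing at least $\binom{\varphi(n)}{2}$ edges. If some $n\in\pi_e(G)$ lies outside $\{1,2,3,4,6\}$ then $\varphi(n)\geq 4$ and $c\geq\binom{4}{2}=6$; so for $c\leq 5$ we must have $\pi_e(G)\subseteq\{1,2,3,4,6\}$, and in particular $\pi(G)\subseteq\{2,3\}$. Writing $a=\omega_2(G)$, $b=\omega_3(G)$, $d=\omega_4(G)$, $f=\omega_6(G)$, the numbers $b,d,f$ are even (each relevant cyclic subgroup contributes $\varphi=2$ generators). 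The within-order cliques already force $\binom{a}{2}\leq 5$ and $\binom{b}{2},\binom{d}{2},\binom{f}{2}\leq 5$, hence $a\leq 3$ and $b,d,f\leq 2$, and therefore $|G| = 1+a+b+d+f \leq 10$.

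Finally I would evaluate $c$ on the finitely many groups of order at most $10$. Using the decomposition $\mathcal{S}(G)=\Delta[K_{\omega_{a_1}(G)},\dots,K_{\omega_{a_k}(G)}]$ from the proof of Theorem \ref{2.5}, the only cross-edges among the nontrivial orders come from $2\mid 4$, $2\mid 6$ and $3\mid 6$, so
\[
c = \binom{a}{2}+\binom{b}{2}+\binom{d}{2}+\binom{f}{2}+ad+af+bf .
\]
Every $p$-group is complete by Theorem \ref{thm0}, giving $c=\binom{|G|-1}{2}\in\{0,0,1,3,6,15,21,28,\dots\}$ for $|G|=1,2,3,4,5,7,8,9,\dots$; thus among $p$-groups only $Z_3$ ($c=1$) and $Z_4,\,Z_2\times Z_2$ ($c=3$) satisfy $1\leq c\leq 5$, and no $p$-group has $c\in\{2,4,5\}$. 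The only non-prime-power orders $\leq 10$ are $6$ and $10$; a direct computation gives $c(Z_6)=8$ and $c(S_3)=4$, while both groups of order $10$ contain an element of order $5$, forcing $c\geq 6$. Collecting the outcomes yields $c=1$ only for $Z_3$, $c=3$ only for $Z_4$ and $Z_2\times Z_2$, $c=4$ only for $S_3\cong D_6$, and no group with $c\in\{2,5\}$, which is exactly the four assertions.

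The main obstacle will be the bookkeeping in the last step: one must be certain that the bound $|G|\leq 10$ really captures every group with $c\leq 5$, so that no larger group is overlooked, and one must separate the two groups of order $6$, since $Z_6$ and $S_3$ have equal order but very different cyclomatic numbers ($8$ versus $4$). The realizability constraints — that an element of order $4$ or $6$ forces the presence of lower-order elements, and that $3\nmid|G|$ when $G$ is a $2$-group — are what rule out the spurious tuples $(a,b,d,f)$ and guarantee that the values $c=2$ and $c=5$ are never attained.
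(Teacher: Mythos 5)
Your proposal is correct, and its opening move --- the identity contributes exactly $|G|-1$ edges, so the cyclomatic number $c$ equals $|E(\mathcal{S}^\ast(G))|$ --- is precisely the reduction the paper makes (each case in the paper's proof begins ``$\mathcal{S}^\ast(G)$ has exactly $c$ edges''). Where you genuinely diverge is in the two subsequent steps. The paper bounds $|G|$ by analyzing how $c$ edges can be arranged in $\mathcal{S}^\ast(G)$ (incident or not), using Lemma \ref{thm35} to exclude pendant vertices of $\mathcal{S}(G)$; this produces case-specific windows such as $4\leq|G|\leq 7$ (tricyclic) and $5\leq|G|\leq 11$ (pentacyclic), which are then disposed of by an unspecified ``case by case investigation'' of all groups of those orders. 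You instead bound the order arithmetically: any element order $n\notin\{1,2,3,4,6\}$ has $\varphi(n)\geq 4$ and forces a clique contributing at least $\binom{4}{2}=6$ edges, so $c\leq 5$ implies $\pi_e(G)\subseteq\{1,2,3,4,6\}$, the counts $\omega_i(G)$ are bounded, and $|G|\leq 10$; the join decomposition from the proof of Theorem \ref{2.5} then gives the closed formula $c=\binom{a}{2}+\binom{b}{2}+\binom{d}{2}+\binom{f}{2}+ad+af+bf$, which, together with completeness of $\mathcal{S}(G)$ for $p$-groups (Theorem \ref{thm0}), lets you evaluate every candidate explicitly ($c(Z_3)=1$; $c(Z_4)=c(Z_2\times Z_2)=3$; $c(S_3)=4$; $c(Z_6)=8$; order-$10$ groups have $c\geq 6$; all other $p$-groups have $c\geq 6$). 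What your route buys is verifiability: the paper's group-by-group inspection is replaced by a transparent computation, and the impossibility of $c\in\{2,5\}$ falls out of the list of attainable values rather than from edge-configuration analysis; what the paper's route buys is sharper per-case order windows without needing the totient estimate. One cosmetic correction: a group may have more than $\varphi(n)$ elements of order $n$ (the count is $\omega_n(G)$, a multiple of $\varphi(n)$), so you should say ``at least $\varphi(n)$ elements of order $n$ form a clique''; since your inequalities only use this direction, nothing in the argument breaks.
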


\begin{proof} Suppose $|G| = n$. Our main proof will have some separate cases as follows:
\begin{enumerate}
\item Assume that $\mathcal{S}(G)$ is unicyclic. Thus $m = n$. Since $deg(e) = n - 1$ the edges in $\mathcal{S}(G)$ are those adjacent to $e$
and only a further edge. Thus $\mathcal{S}(G)$ is a triangle or it contains some pendant vertex. In that last case, by Lemma \ref{thm35}
we have $G \cong Z_2$ and the graph is a tree, a contradiction. Thus $\mathcal{S}(G)$ is necessarily a triangle. This gives $|G| = 3$ and
thus $G \cong Z_3$. The converse is clear.

\item Since $\mathcal{S}(G)$ is bicyclic, $\mathcal{S}^\ast(G)$ has exactly two edges.  By Lemma \ref{thm35},
if these edges are  incident then $|G| = 4$ and in another case we have  $|G| = 5$. In both case, the resulting order supergraph will not be bicyclic, a contradiction. Assume next that $\mathcal{S}(G)$ is pentacyclic. Then $G\ncong 1, Z_2$ and thus, by Lemma \ref{thm35}, $\mathcal{S}(G)$ has no pendant
vertex. The graph $\mathcal{S}^\ast(G)$ contains five edges. Those edges can be incident or not. If they are not incident two by two
then $\mathcal{S}(G)$ is a friendship graph on 11 vertices. On the other hand, $\mathcal{S}^\ast(G)$ must contain at least four vertices because if there are only three vertices there are at most three edges. It follows that $5 \leq |G| \leq 11$. A case by case investigation of all groups with orders  $5 \leq |G| \leq 11$ show that there is no group $G$ with pentacyclic order supergraph.

\item Since $\mathcal{S}(G)$ is tricyclic, $\mathcal{S}^\ast(G)$ has exactly three edges. A case by case investigation shows that we have five possible graph structure for $\mathcal{S}(G)$ depending on that those edges can be incident or not. On the other hand, a similar argument as case (2) shows that $\mathcal{S}(G)$ must contain at least four and  at most seven vertices, i.e. $4 \leq |G| \leq 7$. By considering all groups of these orders, one can easily prove that $|G| = 4$, as desired.

\item Since $\mathcal{S}(G)$ is tetracyclic, $\mathcal{S}^\ast(G)$ has exactly four edges. A similar argument like other cases shows that $5 \leq |G| \leq 9$ and a case by case investigation lead us to the result.
\end{enumerate}
This completes the proof.
\end{proof}

\begin{thm}
Let $G$ be a finite group and $e = xy \in E(\mathcal{S}^\ast(G))$. $e$ is a cut edge  if and only if $\{ x, y\}$
is a component of $\mathcal{S}(G)$.
\end{thm}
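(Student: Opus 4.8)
The plan is to prove both implications, reading everything in the proper order supergraph $\mathcal{S}^\ast(G)$, where the edge $e=xy$ actually lives; note that $e$ could never be a cut edge of $\mathcal{S}(G)$ itself, since $x$ and $y$ are both non-identity and hence both joined to the identity, so $e$ lies on the triangle through it. The reverse implication is then immediate: if $\{x,y\}$ is a component of $\mathcal{S}^\ast(G)$, that component is the single edge $xy$, so deleting $e$ splits it into two isolated vertices and increases the number of components; hence $e$ is a cut edge.

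For the forward implication I would use the standard fact that a cut edge lies on no cycle, in particular on no triangle, so $x$ and $y$ have \emph{no common neighbour} in $\mathcal{S}^\ast(G)$. Assuming without loss of generality that $o(x)\mid o(y)$, the whole argument consists in showing that the absence of a common neighbour forces $o(x)=o(y)=3$ and, moreover, forces $x$ and $y$ to have no neighbour besides one another, which is exactly the statement that $\{x,y\}$ is a component.

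The case analysis on $o(y)$ is where the work lies. When $o(y)\ge 4$ I would produce a common neighbour inside the cyclic group $\langle y\rangle$: the inverse $y^{-1}$ has order $o(y)>2$, so it is distinct from $y$ and is adjacent both to $y$ (equal order) and to $x$ (since $o(x)\mid o(y)=o(y^{-1})$); if it should happen that $y^{-1}=x$ I would instead take $y^{2}$, whose order divides $o(y)$ so that it is adjacent to both $y$ and $x=y^{-1}$, and which is distinct from $x$ precisely because $o(y)>3$. When $o(y)=2$ we have $o(x)=2$, so $x,y$ are involutions, and here I would invoke the classical parity fact that a finite group has an odd number of involutions (pair each non-identity element with its inverse); as there are at least two involutions there are at least three, and a third one is a common neighbour. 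Both of these regimes contradict the absence of a common neighbour, so the only surviving possibility is $o(x)=o(y)=3$. In that case the two endpoints have equal order, whence every neighbour of $x$ other than $y$ is automatically a neighbour of $y$ and conversely; since there is no common neighbour, neither $x$ nor $y$ can have any neighbour besides the other, and therefore $\{x,y\}$ is a component, as required.

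I expect the main obstacle to be the bookkeeping that guarantees the common neighbour produced in the case $o(y)\ge 4$ is genuinely a \emph{third} vertex, distinct from both $x$ and $y$; this is exactly why the argument must split according to whether $y^{-1}=x$ and then fall back on $y^{2}$. A secondary point demanding care is the parity input for involutions: it is what rules out a two-element component consisting of two elements of order $2$, and thereby isolates order $3$ as the only order that can occur at the ends of a cut edge.
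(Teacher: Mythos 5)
Your proposal is correct, and its core is the same as the paper's: a cut edge lies on no cycle, so $x$ and $y$ can have no common neighbour, and when the larger of the two orders is at least $4$ a common neighbour is manufactured inside the cyclic group generated by that endpoint (the paper takes a second generator of $\langle x\rangle$ together with one further element of that subgroup; your $y^{-1}$ versus $y^{2}$ dichotomy is the same device, spelled out more carefully on the distinctness bookkeeping). Where you genuinely diverge is at the small orders. The paper keeps both order $2$ and order $3$ as possible outcomes, arguing in each case that no third element of compatible order can exist and hence that $\{x,y\}$ is a component. You instead eliminate order $2$ outright by the parity count of involutions, so your argument proves something strictly sharper than the statement: both ends of a cut edge must have order exactly $3$. (In fact the paper's order-$2$ case is vacuous for precisely your reason --- a finite group with at least two involutions has at least three --- though the paper's implication remains valid as written.) Two further points in your favour: you prove the converse direction, which the paper's proof omits entirely even though it is trivial, and you note explicitly that the statement must be read with $\{x,y\}$ a component of $\mathcal{S}^\ast(G)$ rather than of $\mathcal{S}(G)$; the wording in the theorem is evidently a slip, as the paper's own proof confirms, since in $\mathcal{S}(G)$ every vertex is adjacent to the identity, so no edge can be a cut edge and no two-vertex set can be a component.
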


\begin{proof}
Suppose $e = xy$ is a cut edge for $\mathcal{S}^\ast(G)$. Without loss of generality, we
assume that $o(y) | o(x)$. Suppose $o(x) = 2$. Then $o(y) = 2$ and obviously $G$ does not have another element of
even order. This shows that $\{ x, y\}$ is a component of $\mathcal{S}^\ast(G)$. If $o(x) =3$ then $o(y) = 3$ and the
same argument shows that $\{ x, y\}$ is a component of $\mathcal{S}^\ast(G)$. So, we can assume $o(x) \geq 4$.
In this case, since  $\varphi(o(x)) \geq 2$, the cyclic subgroup $\langle x \rangle$ has at least two generators
and once more element $z$ such that $o(z) | o(x)$. This implies that $e$ is an edge of a cycle which is impossible.
\end{proof}

The graph $\Gamma$ is called perfect, if $\chi(\Gamma) = \omega(\Gamma)$, where $\chi(\Gamma)$ and $\omega(\Gamma)$ denote the chromatic
and clique numbers of $G$, respectively.

\begin{thm} \label{cho} {\rm (}Chudnovsky et al. \cite{32}{\rm )}
The graph $\Gamma$ is perfect if and only if $\Gamma$ and $\overline{\Gamma}$ don't have an induced odd cycle of length at least five.
\end{thm}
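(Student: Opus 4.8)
This statement is the Strong Perfect Graph Theorem, and I do not expect to produce a self-contained proof: the original argument of Chudnovsky, Robertson, Seymour and Thomas runs to well over a hundred pages. The plan is therefore to dispose of the elementary implication completely and to indicate the structural machinery on which the hard implication rests, flagging where the genuine difficulty lies.

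For the necessity direction I would argue the contrapositive. An induced odd cycle $C_{2k+1}$ with $2k+1 \geq 5$ has clique number $2$ but chromatic number $3$, so it fails $\chi = \omega$ and is not perfect; since perfection is inherited by induced subgraphs, no perfect graph can contain such a cycle. Invoking the Weak Perfect Graph Theorem of Lov\'asz, $\Gamma$ is perfect if and only if $\overline{\Gamma}$ is, so the same obstruction forbids an induced odd cycle of length at least five in the complement. An induced odd antihole of $\Gamma$ is exactly an induced odd hole of $\overline{\Gamma}$, so this accounts for both families named in the statement, and this direction is finished.

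The whole content is the converse: a graph with no induced odd cycle of length $\geq 5$ in itself or its complement (a \emph{Berge graph}) is perfect. Here the plan follows Chudnovsky, Robertson, Seymour and Thomas. The first and decisive step is a decomposition theorem asserting that every Berge graph either lies in one of five \emph{basic} classes that are already known to be perfect --- bipartite graphs, line graphs of bipartite graphs, the complements of these two families, and double split graphs --- or else admits one of a short list of structural faults: a $2$-join, a $2$-join in the complement, a balanced skew partition, or a homogeneous pair. The second step is to combine this with the facts that each basic class is perfect and that none of these faults can occur in a \emph{minimal} imperfect Berge graph (the balanced skew partition case being Chv\'atal's conjecture, settled as part of the same programme); a minimal-counterexample argument then forces every Berge graph to be perfect.

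The main obstacle is unquestionably the decomposition theorem, and within it the skew-partition analysis, which is where the original proof invests most of its length and where the delicate ``trigraph'' and connected-sequence arguments are required. Since reproducing any of this is far beyond the needs of the present paper, the honest plan is to cite Theorem~\ref{cho} as a black box and apply it to the graphs $\mathcal{S}(G)$ and $\mathcal{S}^\ast(G)$, whose induced cycle structure is directly controlled by the divisibility relation on element orders.
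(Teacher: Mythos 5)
The paper gives no proof of this statement at all: it is quoted as the Strong Perfect Graph Theorem of Chudnovsky, Robertson, Seymour and Thomas \cite{32} and used purely as a black box, which is exactly what you propose to do, and your supplementary material (the $\chi=3$, $\omega=2$ obstruction for odd holes together with Lov\'asz's Weak Perfect Graph Theorem for the complement, plus the outline of the Berge-graph decomposition) is accurate. Your treatment therefore matches the paper's approach, with the deep implication correctly delegated to the cited literature.
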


\begin{thm}
The order supergraph of a finite group is perfect.
\end{thm}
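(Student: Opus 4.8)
The plan is to invoke the Strong Perfect Graph Theorem in the form of Theorem~\ref{cho}: it suffices to show that neither $\mathcal{S}(G)$ nor its complement $\overline{\mathcal{S}(G)}$ contains an induced odd cycle of length at least five. The starting point is the join decomposition $\mathcal{S}(G)=\Delta[K_{\omega_{a_1}(G)},\dots,K_{\omega_{a_k}(G)}]$ from the proof of Theorem~\ref{2.5}, where $\Delta$ is the divisibility graph on $\pi_e(G)$ and each block consists of all elements of a fixed order. The key preliminary observation is that on \emph{any} induced cycle $C$ of length at least five the vertices must have pairwise distinct orders: two elements of equal order are true twins in $\mathcal{S}(G)$ (and false twins in $\overline{\mathcal{S}(G)}$), and a pair of twins cannot both lie on an induced cycle of length at least five, since they would be forced to share the same two cycle-neighbours. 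Consequently every such cycle projects injectively, preserving adjacency, to an induced odd cycle of the same length in $\Delta$, respectively in $\overline{\Delta}$, and the whole problem descends to the divisibility poset on $\pi_e(G)$.

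Next I would realize $\mathcal{S}(G)$ as a comparability graph. Fixing a linear order on each order class and setting $x\preceq y$ whenever $o(x)\mid o(y)$, with ties among elements of equal order broken by the chosen linear order, yields a partial order on $G$ whose comparability graph is exactly $\mathcal{S}(G)$ (transitivity and antisymmetry are routine to check). The induced orientation is transitive, and on an induced cycle of length at least four every vertex must be a local source or a local sink: a vertex carrying one in-edge and one out-edge would, by transitivity, create an edge between its two cycle-neighbours, contradicting that the cycle is induced. Since sources and sinks must then alternate around the cycle, its length is forced to be even. This rules out an odd hole of length at least five in $\mathcal{S}(G)$.

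The main obstacle is the complementary half, namely excluding odd holes in $\overline{\mathcal{S}(G)}$ --- equivalently, excluding induced odd antiholes $\overline{C_{2k+1}}$ with $k\geq 2$ in $\mathcal{S}(G)$. The case $k=2$ is free, since $\overline{C_5}\cong C_5$ and that cycle was already excluded above. For $k\geq 3$ I would use the distinct-orders reduction to pass to an induced odd cycle in the incomparability graph $\overline{\Delta}$ of the divisibility poset, in which consecutive vertices are incomparable and non-consecutive vertices are comparable. Here the transitive-orientation trick no longer applies directly, and this is the delicate step; I expect to handle it by an extremal argument, choosing a divisibility-maximal element $a_t$ of the cycle (so that every non-neighbour of $a_t$ divides $a_t$) and then tracking the forced comparabilities among $a_{t-1}$, $a_{t+1}$ and their neighbours to produce a chord, contradicting inducedness. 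Failing a fully self-contained argument, one may instead appeal to the classical fact that co-comparability graphs are perfect, which in particular forbids odd holes in $\overline{\Delta}$.

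With both $\mathcal{S}(G)$ and $\overline{\mathcal{S}(G)}$ shown to be free of induced odd cycles of length at least five, Theorem~\ref{cho} immediately yields that $\mathcal{S}(G)$ is perfect.
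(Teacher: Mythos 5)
Your proposal is correct, and its first half coincides with the paper's own proof: the paper also argues via Theorem~\ref{cho}, and its exclusion of odd holes is exactly your transitive-orientation argument (a directed path $x\to y\to z$ along an induced cycle forces the chord $xz$ by transitivity of divisibility, so local sources and sinks must alternate and the cycle must have even length). The genuine difference is in the antihole half. After the shared observation $\overline{C_5}\cong C_5$, the paper stays inside $\mathcal{S}(G)$: it notes that an antihole on at least seven vertices contains a triangle, while asserting --- essentially without justification --- that every vertex of the directed antihole has in-degree $0$ or out-degree $0$, which a triangle violates. You instead reduce, via the twin argument (which is sound: two equal-order elements are adjacent twins in $\mathcal{S}(G)$ and non-adjacent twins in the complement, and neither kind can coexist on an induced cycle of length at least five), to the divisibility graph $\Delta$ on $\pi_e(G)$, and finish either by a sketched extremal argument or by quoting that co-comparability graphs are perfect. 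The quotation is a legitimate classical fact (it follows from Dilworth's theorem), so your proof closes; moreover, your sketch does complete as you guessed: if $a_1,\dots,a_n$ ($n\geq 5$) is an induced cycle of $\overline{\Delta}$ with $a_1$ divisibility-maximal among its vertices, then $a_j\mid a_1$ for $3\leq j\leq n-1$, whence transitivity and inducedness give $a_j\mid a_2$ for $4\leq j\leq n-1$ and $a_j\mid a_n$ for $3\leq j\leq n-2$; since $a_2$ and $a_n$ are comparable, either $a_{n-1}\mid a_2\mid a_n$ or $a_3\mid a_n\mid a_2$, and in both cases a consecutive (hence incomparable) pair becomes comparable, a contradiction. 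One structural remark: your identification of $\mathcal{S}(G)$ as a comparability graph actually makes the appeal to the strong perfect graph theorem redundant, since comparability graphs and their complements are perfect by classical Mirsky--Dilworth theory; that is what your route buys --- a one-line proof resting on much shallower theory --- whereas the paper's route buys an argument carried out entirely by elementary divisibility reasoning inside $\mathcal{S}(G)$ (modulo the black-box use of Theorem~\ref{cho}), at the cost of the unproved source-or-sink claim that your version repairs.
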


\begin{proof}
Suppose $\mathcal{S}(G)$ has an induced odd cycle $C$ of length
at least five and $\vec{C}$ is its associated directed cycle in
the directed order supergraph of $G$. Choose a directed path $x
\rightarrow y \rightarrow z$ in $\vec{C}$. Then $o(y) | o(x)$ and
$o(z) | o(y)$ which implies that $xz \in E(\mathcal{S}(G))$.
Therefore, $\mathcal{S}(G)$ has an  odd cycle of length $3$
contradicts by Theorem \ref{cho}.

We now assume that $\overline{C}$ is an induced odd cycle of length at least five in
$\overline{\mathcal{S}(G)}$. If $l(\overline{C}) = 5$ then $C$
is an induced cycle of length $5$, contradicts by the first part of proof. So, we can assume
that $l(\overline{C}) \geq 7$. Then $C$ contains a triangle. On the other hand, for
each $x \in V(\vec{C})$, $Indeg(x) = 0$ or $Outdeg(x) = 0$ which is not correct for a triangle.
This leads us to another contradiction. Hence by Theorem \ref{cho} the order supergraph $\mathcal{S}(G)$ is perfect.
\end{proof}

In what follows, we apply results of \cite{01,02} to compute  the
number of connected components of $\mathcal{S}^\ast(A_n)$ and
$\mathcal{S}^\ast(S_n)$, where $A_n$ and $S_n$ denote as usual
the alternating and symmetric groups on $n$ symbols. In our next
results, $P$ stands for the set of all prime numbers and $aP + b
= \{ ap + b \ | \ p \in P\}$.

\begin{lem}\label{lem25}
If $n \in \{ p, p+1\}$,  $p \in P$, then elements of order $p$ is a component of
$\mathcal{S}^\ast(S_n)$. If $n \geq 4$ and $n \in \{ p, p+1, p+2\}$,  $p \in P$, then elements of
order $p$ is a component of $\mathcal{S}^\ast(A_n)$.
\end{lem}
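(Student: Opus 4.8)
The plan is to exploit the fact that all elements of a fixed order $p$ automatically satisfy the divisibility relation among themselves, so the set of elements of order $p$ is always a clique in $\mathcal{S}^\ast(G)$; hence it forms a \emph{whole} connected component precisely when none of its vertices has a neighbour of a different order. So first I would reduce the assertion to a purely arithmetic condition on element orders. If $x$ has order $p$ and $z$ has order $m \neq p$, then $x$ and $z$ are adjacent in $\mathcal{S}^\ast$ if and only if $p \mid m$ or $m \mid p$; since $p$ is prime and the identity has been deleted, $m \mid p$ forces $m = p$, a contradiction. Thus the elements of order $p$ form a component exactly when $G$ contains no element of order $kp$ with $k \geq 2$, that is, no element whose order is a proper multiple of $p$.

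Next I would translate this into cycle language. A permutation has order divisible by $p$ if and only if its disjoint cycle decomposition contains a cycle of length divisible by $p$. For $S_n$ with $n \in \{p, p+1\}$ the inequality $n < 2p$ (valid for every prime, since $p+1 < 2p$ whenever $p > 1$) forces such a cycle to have length exactly $p$, with room for at most one of them together with at most one fixed point. Consequently every element of $S_n$ of order divisible by $p$ is a single $p$-cycle, of order exactly $p$, and the symmetric-group part follows.

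For $A_n$ with $n \geq 4$ and $n \in \{p, p+1, p+2\}$ the same count applies verbatim when $n \in \{p, p+1\}$, and the only genuinely new situation is $n = p+2$. Here a cycle of length divisible by $p$ is again forced to have length $p$ (using $n = p+2 < 2p$ for odd $p \geq 3$), leaving two points; these are either both fixed, giving a $p$-cycle of order $p$, or they form a transposition, giving a product of a $p$-cycle and a transposition, of order $2p$. The hard part of the argument is exactly to discard this last element: I would note that for odd $p$ a $p$-cycle is an even permutation while a transposition is odd, so their product is \emph{odd} and therefore does not lie in $A_n$. Hence inside $A_{p+2}$ the only elements of order divisible by $p$ again have order exactly $p$. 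The residual case $p = 2$ (which forces $n = 4$) is settled directly by observing that the element orders occurring in $A_4$ are $1,2,3$, so the only even-order elements are the double transpositions, of order $2$. Assembling these cases shows that the elements of order $p$ constitute a component of $\mathcal{S}^\ast(A_n)$ throughout the stated range.
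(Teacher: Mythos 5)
Your proof is correct and takes essentially the same approach as the paper: the paper's entire proof is the one-line remark that under these hypotheses $S_n$ and $A_n$ contain no element of order $kp$ with $k \geq 2$, which is exactly the arithmetic condition you reduce the lemma to. The only difference is that you go on to verify this fact explicitly (the $n < 2p$ cycle-length count, the parity argument ruling out the $p$-cycle-times-transposition element in $A_{p+2}$, and the $p=2$, $n=4$ case), details the paper leaves to the reader.
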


\begin{proof}
The proof follows from the fact that the groups $A_n$ and $S_n$ do not have elements of order $kp$, $k \geq 2$.
\end{proof}

\begin{lem}\label{lem26} {\rm (Bubboloni et al. \cite[Page 24]{01})}
Suppose $B_1(n) = P \cap \{n, n-1\}$ and $\Sigma_n = \{ g\in S_n \ | \ o(g) \not\in B_1(n)\}$, $n \geq 8$.
Then $\Sigma_n$ is a component of $\mathcal{S}^\ast(S_n)$.
\end{lem}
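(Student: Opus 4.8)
The plan is to verify the two defining properties of a connected component: first that $\Sigma_n$ is \emph{closed}, i.e. no vertex of $\Sigma_n$ is $\mathcal{S}^\ast(S_n)$-adjacent to any vertex of its complement, and second that the induced subgraph on the non-identity elements of $\Sigma_n$ is connected. The whole argument rests on one structural observation. Since $n \geq 8$, the integers $n$ and $n-1$ are consecutive and both exceed $2$, so at most one of them is prime; hence $B_1(n)$ contains at most one prime, and any $p \in B_1(n)$ is an odd prime with $p \geq 7$. The key point is that whenever $p \in \{n-1,n\}$ is prime, every $g \in S_n$ with $p \mid o(g)$ in fact has $o(g) = p$: a prime $p$ can divide $o(g)$ only if some cycle of $g$ has length a multiple of $p$, but $2p > n$ forces that length to be exactly $p$, using up either all $n$ points (if $p=n$) or all but one (if $p=n-1$), so the remaining points are fixed and $o(g)=p$.

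For closedness, let $g$ have order $p \in B_1(n)$ and let $h \in \Sigma_n$ be a non-identity element adjacent to $g$, so $o(g) \mid o(h)$ or $o(h) \mid o(g)$. In the first case $p \mid o(h)$, whence $o(h) = p$ by the observation above, contradicting $h \in \Sigma_n$. In the second case $o(h) \mid p$ gives $o(h) \in \{1,p\}$; since $h \neq e$ this forces $o(h) = p$, again a contradiction. Thus there is no edge between $\Sigma_n$ and $S_n \setminus \Sigma_n$, so $\Sigma_n$ is a union of components of $\mathcal{S}^\ast(S_n)$.

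It remains to show connectedness. First note that the same observation, applied contrapositively, shows that every prime factor $q$ of the order of a vertex $h \in \Sigma_n$ satisfies $q \leq n-2$: a prime $q \mid o(h)$ obeys $q \leq n$, and $q \in \{n-1,n\}$ would force $o(h) = q \in B_1(n)$. I would fix a transposition $\tau$ (which lies in $\Sigma_n$ because $2 \notin B_1(n)$) as a hub and connect every other vertex to it. If $h \in \Sigma_n$ has even order then $2 \mid o(h)$ gives the edge $h\tau$ directly. If $o(h)$ is odd, pick an odd prime $q \mid o(h)$; then $q \leq n-2$, so $S_n$ contains an element $u$ of order $2q$ (a $q$-cycle together with a disjoint transposition, which fits since $q+2 \leq n$). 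Now $q \mid o(h)$, $q \mid 2q$ and $2 \mid 2q$ yield the walk $h - g_q - u - \tau$, where $g_q$ is any element of order $q$; all three intermediate orders $q \leq n-2$, $2q$ and $2$ avoid $B_1(n)$, so the whole walk lies in $\Sigma_n$. Hence every vertex of $\Sigma_n$ is joined to $\tau$, and $\Sigma_n$ is connected.

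I expect the main obstacle to be the connectivity step, and within it the need to guarantee that the bridging element of order $2q$ actually exists in $S_n$; this is exactly what the uniform bound $q \leq n-2$ on the prime factors of orders in $\Sigma_n$ provides, and that bound in turn is the payoff of the forced-order observation. Combining closedness with connectedness identifies $\Sigma_n$ as a single component of $\mathcal{S}^\ast(S_n)$, as claimed.
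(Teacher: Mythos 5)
Your proof is correct, and it is worth noting that the paper itself offers no argument for this statement at all: it is imported as a quoted result, attributed to Bubboloni et al.\ \cite[Page 24]{01}, where the corresponding statement is proved in the setting of power graphs and their quotients rather than for the order supergraph directly. Your argument is therefore a genuine, self-contained replacement. Both halves check out: the forced-order observation (a prime $p\in\{n-1,n\}$ dividing $o(g)$ forces a single $p$-cycle, hence $o(g)=p$, since $2p>n$) correctly yields closedness, including the degenerate reading of the statement in which $\Sigma_n$ must be taken modulo the identity (the lemma as printed carelessly includes $e$, which is not a vertex of $\mathcal{S}^\ast(S_n)$; you handle this properly). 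The connectivity half is also sound: the bound $q\le n-2$ on prime divisors of orders in $\Sigma_n$ is exactly what guarantees the existence of the bridging element of order $2q$ on $q+2\le n$ points, and the hub walk $h - g_q - u - \tau$ stays inside $\Sigma_n$ because $q\le n-2$, $2q$ is composite, and $2<n-1$, so none of these orders lies in $B_1(n)$. Note also that your argument covers the case $B_1(n)=\emptyset$ uniformly (closedness is vacuous and the hub argument shows $\mathcal{S}^\ast(S_n)$ is connected), which is cleaner than splitting cases. The one economy you could make: the remark that at most one of $n,n-1$ is prime is never used.
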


Set $A = P \cup (P+1) \cup (P+2) \cup 2P \cup (2P + 1)$. For the sake of completeness, we mention here
a result of \cite{02}.

\begin{thm} \label{thm27} {\rm (Bubboloni et al. \cite[Corollary C]{02})}
The proper power graph $\mathcal{P}^\ast(A_n)$ is connected if and only if $n = 3$ or $n \not\in A $.
\end{thm}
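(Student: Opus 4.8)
The plan is to read the component structure of $\mathcal{P}^\ast(A_n)$ off the lattice of cyclic subgroups. The key reduction, which I would first isolate as a lemma valid for an arbitrary finite group $G$, is that the components of $\mathcal{P}^\ast(G)$ correspond to the components of the \emph{intersection graph} of the maximal cyclic subgroups of $G$ (vertices are the maximal cyclic subgroups, with an edge whenever two of them meet in a non-identity element). Indeed, inside a cyclic group every non-identity element is a power of a generator, so each maximal cyclic subgroup spans a connected piece of $\mathcal{P}^\ast(G)$; moreover every edge $xy$ of $\mathcal{P}^\ast(G)$ satisfies $\langle x\rangle\subseteq\langle y\rangle$ or $\langle y\rangle\subseteq\langle x\rangle$ and hence lies inside a single maximal cyclic subgroup, while a shared non-identity element $z\in\langle g_1\rangle\cap\langle g_2\rangle$ is a common neighbour of $g_1$ and $g_2$. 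Thus $\mathcal{P}^\ast(A_n)$ is connected if and only if this intersection graph is connected.

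The second step is to pin down the only way a piece can break off. If $x$ has prime order $p$, then $\langle x\rangle$ has no proper non-trivial subgroup, so the class of $p$-elements fails to connect to elements of other orders exactly when $A_n$ has no element of order $kp$ with $k\ge 2$; this is the same arithmetic obstruction recorded in Lemma \ref{lem25}. I would then translate ``$A_n$ has an element of order a proper multiple of $p$'' into a cycle-type statement: one must place a $p$-cycle together with a disjoint \emph{even} permutation of order $>1$ on the remaining $n-p$ points. The evenness is the crux, since for odd $p$ a $p$-cycle is already an even permutation, so a single transposition is forbidden. This is precisely what distinguishes $A_n$ from $S_n$: in $S_n$ a lone transposition supplies an element of order $2p$, whereas in $A_n$ two spare points do not suffice, which pushes the obstruction out to $n=p+2$ and accounts for the summand $P+2$ absent from the symmetric-group picture (cf. Lemma \ref{lem26}).

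The third step is the case analysis according to the position of the offending prime relative to $n$. Writing $p$ for the largest prime admitting an isolated class, the cases $n\in\{p,p+1,p+2\}$ leave too few points to build an even companion, yielding the terms $P$, $P+1$, $P+2$; the terms $2P$ and $2P+1$ arise from the analogous room-and-parity failure for the secondary prime that surfaces when $n=2p$ or $n=2p+1$. For the converse I would show that once $n\notin A$ (and $n\ne 3$), every $p$-cycle can be completed by a disjoint $3$-cycle or a disjoint double transposition to an even element of order $3p$ or $2p$ (these fit because $n\ge p+3$), so no prime class is isolated; the common small elements such as $3$-cycles and double transpositions then serve as hubs linking all maximal cyclic subgroups, giving a connected intersection graph. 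The exceptional value $n=3$ is checked by hand, since $A_3\cong Z_3$ gives a single edge.

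The hard part will be making the converse uniform. Rather than a single clean argument, one must verify for each admissible $n$ that \emph{every} maximal cyclic subgroup genuinely shares a non-identity element with the bulk of the graph, and this requires careful bookkeeping of which even cycle types (and which small exceptional orders) are realizable, together with separate handling of the primes $2$ and $3$ and of the boundary values where the available room is tight. I would discharge these cases by leaning on the detailed component computations of Bubboloni et al. \cite{01,02}, which is exactly why the statement is imported here in the packaged form of \cite[Corollary C]{02}.
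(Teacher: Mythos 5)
Your proposal has a genuine gap, and it is concentrated exactly where the set $A$ is least intuitive: the terms $2P$ and $2P+1$. Your stated criterion for disconnection is that some prime $p$ admits no element of order $kp$, $k\ge 2$, in $A_n$ (an isolated ``class of $p$-elements''), and your converse argument is that once every $p$-cycle can be completed to an even element of order $2p$ or $3p$, no prime class is isolated and the graph is connected. This criterion gives the wrong answer for $n=2p$, $p\ge 5$: there a single $p$-cycle together with a disjoint $3$-cycle is an even element of order $3p$ (it fits, since $2p\ge p+3$), so by your test nothing is isolated and $\mathcal{P}^\ast(A_{2p})$ would be connected --- contradicting the theorem, which puts $2p\in A$. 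The actual obstruction at $n\in\{2p,2p+1\}$ is of a different nature: a permutation that is a product of two disjoint $p$-cycles is an even element of order $p$ which generates a \emph{maximal} cyclic subgroup (the only candidate for a larger overgroup would come from a $2p$-cycle, which is odd and hence not in $A_n$), and all of its non-trivial powers are again products of two $p$-cycles. So these subgroups are isolated cliques even though \emph{other} elements of order $p$ (single $p$-cycles) are well connected to the bulk. Your reduction to the intersection graph of maximal cyclic subgroups is sound and would have caught this, but your second step silently replaces ``maximal cyclic subgroup'' by ``order class,'' and that conflation is precisely what loses the $2P$ and $2P+1$ cases; calling them an ``analogous room-and-parity failure'' does not describe the real phenomenon.

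A second, structural problem: your converse rests on unverified hub claims (that $3$-cycles and double transpositions link \emph{all} maximal cyclic subgroups --- note, e.g., that a product of two disjoint $3$-cycles is not adjacent to any single $3$-cycle, and connecting them requires ad hoc intermediaries such as an even element whose square is the double $3$-cycle), and you ultimately discharge ``the hard part'' by citing the component computations of Bubboloni et al.\ \cite{01,02}, i.e., the very result being proved. That makes the proposal circular as a standalone argument. For what it is worth, the paper itself offers no proof either --- Theorem \ref{thm27} is imported verbatim as \cite[Corollary C]{02} --- so deferring to the citation is defensible, but then the intermediate mechanism you supply should at least be correct, and for $2P$ and $2P+1$ it is not.
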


\begin{cor} \label{2.29}
If $n = 3$ or $n \not\in A$ then $\mathcal{S}^\ast(A_n)$ is connected.
\end{cor}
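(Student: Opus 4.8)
The plan is to exploit the fact that the power graph is always a spanning subgraph of the order supergraph, so that connectivity passes upward from the former to the latter. First I would observe that $\mathcal{P}(G)$ and $\mathcal{S}(G)$ share the common vertex set $G$, and that every edge of $\mathcal{P}(G)$ is also an edge of $\mathcal{S}(G)$. Indeed, if $x$ and $y$ are adjacent in $\mathcal{P}(G)$ then one is a power of the other, say $y \in \langle x \rangle$, whence $o(y) \mid o(x)$; but this is exactly the adjacency condition defining $\mathcal{S}(G)$. Hence $E(\mathcal{P}(G)) \subseteq E(\mathcal{S}(G))$, and deleting the identity from both graphs preserves this inclusion over the common vertex set $G \setminus \{e\}$, so that $\mathcal{P}^\ast(G)$ is a spanning subgraph of $\mathcal{S}^\ast(G)$.

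Next I would invoke the elementary principle that a graph possessing a connected spanning subgraph is itself connected: any two vertices already joined by a path in the subgraph remain joined by that same path in the larger graph. Applying this with $\mathcal{P}^\ast(A_n)$ inside $\mathcal{S}^\ast(A_n)$, it suffices to know that $\mathcal{P}^\ast(A_n)$ is connected under the stated hypothesis. This last fact is supplied directly by Theorem \ref{thm27}, which asserts precisely that $\mathcal{P}^\ast(A_n)$ is connected whenever $n = 3$ or $n \notin A$. Combining the two steps, $\mathcal{S}^\ast(A_n)$ inherits connectivity from its spanning subgraph $\mathcal{P}^\ast(A_n)$, completing the argument.

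Since the whole deduction rests on two soft observations---the edge inclusion and the spanning-subgraph connectivity principle---I do not anticipate any genuine computational obstacle; the corollary is a direct consequence of Theorem \ref{thm27}. The only point deserving care is checking that passing to the proper graphs does not disturb the spanning relationship, which it plainly does not, since both $\mathcal{P}^\ast(A_n)$ and $\mathcal{S}^\ast(A_n)$ are obtained by deleting the single vertex $e$ together with its incident edges. It is worth noting that this reasoning yields only the stated one-directional implication rather than an equivalence: because $\mathcal{S}^\ast(A_n)$ carries strictly more edges than $\mathcal{P}^\ast(A_n)$ in general, it may well remain connected even in cases where the proper power graph disconnects, so no converse is to be expected.
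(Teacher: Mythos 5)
Your proposal is correct and follows exactly the paper's own argument: the paper likewise cites Theorem \ref{thm27} for the connectivity of $\mathcal{P}^\ast(A_n)$ and then notes that $\mathcal{P}^\ast(A_n)$ is a (spanning) subgraph of $\mathcal{S}^\ast(A_n)$, so connectivity transfers upward. You merely spell out the edge-inclusion $E(\mathcal{P}(G)) \subseteq E(\mathcal{S}(G))$ and the effect of deleting the identity in more detail than the paper does.
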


\begin{proof}
If $n = 3$ or $n \not\in A$ then by Theorem \ref{thm27}, the proper
power graph $\mathcal{P}^\ast(A_n)$ is connected. The result now
follows from this fact that  $\mathcal{P}^\ast(A_n)$ is a
subgraph of $\mathcal{S}^\ast(A_n)$.
\end{proof}

Define $B_2(n) = \{ n \mid n \in P, n-1 \in P, n-2 \in P, \frac{n}{2} \in P or \frac{n-1}{2} \in P\}$, $\theta_n = \left(\pi_e(A_n) \setminus \{
1\}\right) \setminus B_2(n)$ and  $\overline{\xi}_n = \{ g \in
A_n \ | \ o(g) \in \theta_n\}$. In what follows, the number of
connected components of a graph $\Gamma$ is denoted by
$c(\Gamma)$. The elements of $B_2(n)$ are called the critical
orders of $n$.

\begin{cor}\label{lem30}{\rm (Bubboloni et al. \cite[Page 15]{02})}
There exists a component in $\mathcal{S}^\ast(A_n)$ containing
$\overline{\xi}_n$.
\end{cor}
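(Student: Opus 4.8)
The plan is to reuse, verbatim, the mechanism already exploited for Corollary~\ref{2.29}: transfer a component statement from the proper power graph to the proper order supergraph through the inclusion $\mathcal{P}^\ast(A_n) \subseteq \mathcal{S}^\ast(A_n)$. Concretely, I would first quote the substantive input, namely the result of Bubboloni et al. \cite[Page 15]{02} that in $\mathcal{P}^\ast(A_n)$ the set $\overline{\xi}_n = \{ g \in A_n \mid o(g) \in \theta_n\}$ lies in a single connected component. All the genuine combinatorics is concentrated in that statement, since establishing it requires analysing which element orders of $A_n$ are linked by power relations once the critical orders $B_2(n)$ have been removed from $\pi_e(A_n) \setminus \{1\}$.

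Next I would record the elementary comparison between the two graphs. Both $\mathcal{P}^\ast(A_n)$ and $\mathcal{S}^\ast(A_n)$ carry the same vertex set $A_n \setminus \{1\}$, and if $y$ is a power of $x$ then $o(y) \mid o(x)$; hence every edge of $\mathcal{P}^\ast(A_n)$ is an edge of $\mathcal{S}^\ast(A_n)$, so $\mathcal{P}^\ast(A_n)$ is a spanning subgraph of $\mathcal{S}^\ast(A_n)$. Passing to a supergraph on the same vertices can only amalgamate connected components, never split them: any edge-sequence that is a path in $\mathcal{P}^\ast(A_n)$ is still a path in $\mathcal{S}^\ast(A_n)$. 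Combining the two steps, the whole $\mathcal{P}^\ast(A_n)$-component that contains $\overline{\xi}_n$ is absorbed into one $\mathcal{S}^\ast(A_n)$-component, and that component then contains $\overline{\xi}_n$, as required.

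The point to be careful about is not a difficulty in this corollary but in the inherited one: everything delicate---the precise description of the critical orders $B_2(n)$ and the verification that $\overline{\xi}_n$ is power-graph connected---is imported from \cite{02}. At our level the only traps are to use the inclusion in the correct direction (power graph inside order supergraph, and not the reverse) and to claim no more than what is asserted, namely that $\overline{\xi}_n$ is contained in \emph{some} component, not that it constitutes a full component of $\mathcal{S}^\ast(A_n)$. Were one instead to want a self-contained argument, the natural route would be to show directly that the orders in $\theta_n$ are pairwise joined by divisibility chains, all routing through a small composite hub order present in $\pi_e(A_n)$; but that merely reroutes the work onto an explicit understanding of $B_2(n)$, which is exactly what \cite{02} supplies. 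For this reason the subgraph transfer is the shortest honest proof.
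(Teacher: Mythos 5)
Your proposal is correct and matches the paper's intent: the paper states this corollary without proof, as an immediate consequence of the cited result of Bubboloni et al.\ \cite[Page 15]{02}, and the implicit justification is exactly the transfer you spell out, namely that $\mathcal{P}^\ast(A_n)$ is a spanning subgraph of $\mathcal{S}^\ast(A_n)$, so a set lying in one component of the former lies in one component of the latter. This is the same mechanism the paper makes explicit in its proof of Corollary~\ref{2.29}, so your argument is essentially the paper's own approach, just written out.
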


\begin{thm}
The proper order supergraph $\mathcal{S}^\ast(A_n)$ is connected
if and only if $n = 3$ or  $n, n-1$ and $n-2$ are not prime integers. The
maximum number of connected component is $3$ and the graph
$\mathcal{S}^\ast(A_n)$ attained the maximum possible of
component if and only if $n \in P \cap (P + 2)$.
\end{thm}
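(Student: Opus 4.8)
The plan is to analyse $\mathcal{S}^\ast(A_n)$ through the induced structure on element orders. Elements of a common order form a clique, and two whole order-classes are either completely joined (exactly when one order divides the other) or not joined at all, so the components of $\mathcal{S}^\ast(A_n)$ are governed entirely by the divisibility relation on $\pi_e(A_n)\setminus\{1\}$. The backbone is Corollary~\ref{lem30}: all elements whose order lies in $\theta_n$ sit inside one component, which I call the \emph{main} component $M$. It then remains only to locate the components that are not $M$; these can be carried by a whole order-class only when that order is a \emph{critical} order from $B_2(n)$. So the proof reduces to deciding, for each critical order, whether its class is isolated or is glued to $M$, after which the statement is elementary combinatorics on $\{n,n-1,n-2\}$.

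First I would record the isolated classes. By Lemma~\ref{lem25}, for every prime $p\in\{n,n-1,n-2\}$ (with $n\geq 4$) the elements of order $p$ form a component, the reason being that $A_n$ then has no element of order $kp$ with $k\geq2$: realising such an order requires a $p$-cycle together with a disjoint $3$-cycle or double transposition, i.e. at least $p+3\geq n+1$ points, which do not fit, so order $p$ has no divisibility-neighbour but itself. Next I would show every remaining class lies in $M$. It suffices to place each prime order, since any composite realized order $m$ is adjacent to a realized prime $p\mid m$ (take a suitable power) and hence joins whichever component contains that prime. For a prime $p\notin\{n,n-1,n-2\}$ the delicate case is $p=n/2$ or $p=(n-1)/2$, arising from $n\in 2P\cup(2P+1)$; here I would exhibit the realized composite order $2p$, built from a $p$-cycle and a disjoint double transposition, which fits once $p+4\leq n$. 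Since $2\mid 2p$ and order $2$ lies in $M$ (being non-critical for $n$ large), and $p\mid 2p$, the class of $p$ joins $M$. This is precisely where $\mathcal{S}^\ast$ is strictly more connected than $\mathcal{P}^\ast$: the divisibility super-edges absorb the extra components that Theorem~\ref{thm27} and Corollary~\ref{2.29} record for the proper power graph.

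Granting these facts, $c(\mathcal{S}^\ast(A_n))$ equals the number of primes in $\{n,n-1,n-2\}$ plus one for $M$. The connectivity dichotomy follows: when no member of $\{n,n-1,n-2\}$ is prime the count is $1$, whereas if some member is prime and $n\geq 4$ then, since $A_n$ realises more than one order, the isolated prime class cannot exhaust the graph and $c(\mathcal{S}^\ast(A_n))\geq 2$; the single genuine exception $n=3$, where $A_3\cong Z_3$ is a clique on two vertices, is verified by hand. For the bound, among any three consecutive integers one is divisible by $3$, so $\{n,n-1,n-2\}$ contains at most two primes, and two primes among them can only be the odd members $n$ and $n-2$ (two consecutive integers cannot both be prime for $n\geq5$). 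Hence $c(\mathcal{S}^\ast(A_n))\leq 2+1=3$, with equality forcing $n,n-2\in P$, that is $n\in P\cap(P+2)$; conversely for each such $n$ the value $3$ is confirmed directly.

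The main obstacle is the control of the critical orders that are \emph{not} among $n,n-1,n-2$, namely $n/2$ and $(n-1)/2$ when these are prime: one must confirm that the composite multiple used to glue them to $M$ actually exists inside $n$ points. This gluing fails for a short list of small $n$ (for instance $A_4$ and $A_6$, where the double transposition needed to realise $2q$ does not fit and the class of order $q=n/2$ stays isolated), so I would isolate the range $n\leq 7$ and dispatch it directly, by hand or with the small-group computations already invoked elsewhere in the paper, running the uniform argument above only for the remaining $n$.
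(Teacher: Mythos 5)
Your plan is sound, and it is genuinely more self-contained than the paper's own proof: the paper merely lists $\pi_e(A_n)$ (computed by GAP) for $n\leq 10$, reads off the component counts, and for $n\geq 11$ defers to ``a similar argument as the proof of Theorem A and Corollary C'' of Bubboloni et al.\ without giving any details. What you supply is precisely the content hidden behind that citation: the reduction of connectivity to the divisibility relation on $\pi_e(A_n)\setminus\{1\}$ (order-classes are cliques, joined wholly or not at all), the isolation of the classes of prime order $p\in\{n,n-1,n-2\}$ via the point count $p+3>n$ (this is the paper's Lemma~\ref{lem25}, for which you also reconstruct the proof), the gluing of the remaining critical primes $p=n/2$ and $p=(n-1)/2$ into the main component of Corollary~\ref{lem30} by exhibiting an element of order $2p$ on $p+4$ points, and the elementary number theory giving $c(\mathcal{S}^\ast(A_n))=1+|P\cap\{n,n-1,n-2\}|$. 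Your threshold bookkeeping is correct: the $2p$-gluing needs $p+4\leq n$, which holds for $n=2p+1\geq 7$ and $n=2p\geq 10$, so cutting at $n\leq 7$ and treating those cases directly is legitimate, and for $n\geq 8$ the twin-prime characterization follows exactly as you argue.

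There is, however, one point you must confront rather than leave implicit: your own (correct) observation about $A_6$ defeats the ``only if'' half of the second assertion exactly as stated. In $A_6$ the class of order $3=n/2$ stays isolated (a $3$-cycle plus a double transposition needs $7$ points), the class of order $5=n-1$ is isolated by Lemma~\ref{lem25}, and the orders $2,4$ form the main component; hence $c(\mathcal{S}^\ast(A_6))=3$ although $6\notin P\cap(P+2)$. The paper's own GAP table records $c(\mathcal{S}^\ast(A_6))=3$ and thus contradicts its own statement in the same way, so this defect is inherited from the theorem, not introduced by you. But your proposal asserts that the small-case sweep will ``confirm'' the iff, while in fact it refutes it at $n=6$; the honest conclusion of your argument is the corrected statement that the maximum $3$ is attained if and only if $n\in\bigl(P\cap(P+2)\bigr)\cup\{6\}$ (your uniform argument proves the clean equivalence for all $n\geq 7$, and $n\leq 6$ contributes only this one exception). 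You should state this amendment explicitly; apart from that, your proof goes through.
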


\begin{proof}
 We first assume that $n \leq 10$. The graph
$\mathcal{S}^\ast(A_2)$ is empty  and obviously
$c(\mathcal{S}^\ast(A_3)) = 1$. Our calculations by GAP \cite{15}
show that
\begin{eqnarray*}
\pi_e(A_4) &=& \{ 1, 2, 3\}, \\
\pi_e(A_5) &=& \{ 1, 2, 3, 5\},\\
\pi_e(A_6) &=& \{ 1, 2, 3, 4, 5\},\\
\pi_e(A_7) &=& \{ 1, 2, 3, 4, 5, 6, 7\},\\
\pi_e(A_8) &=& \{ 1, 2, 3, 4, 5, 6, 7, 15\},\\
\pi_e(A_9) &=& \{ 1, 2, 3, 4, 5, 6, 7, 9, 10, 12, 15\},\\
\pi_e(A_{10}) &=& \{ 1, 2, 3, 4, 5, 6, 7, 8, 9, 10, 12, 15, 21\}.
\end{eqnarray*}
From the set of element orders one can see that
$c(\mathcal{S}^\ast(A_4)) = 2$,  $c(\mathcal{S}^\ast(A_5)) =
c(\mathcal{S}^\ast(A_6))$ = $c(\mathcal{S}^\ast(A_7)) = 3$,
$c(\mathcal{S}^\ast(A_8))$ = $c(\mathcal{S}^\ast(A_9)) = 2$ and
$c(\mathcal{S}^\ast(A_{10})) = 1$.
If $n \geq 11$ then a similar argument as the proof of \cite[Theorem A and Corollary C]{02} will complete the proof.
\end{proof}

\begin{thm} The number of connected components of  $\mathcal{S}^\ast(S_n)$ can be computed as follows:
\begin{enumerate}
\item $c(\mathcal{S}^\ast(S_2)) = 1$ and $c(\mathcal{S}^\ast(S_3)) = c(\mathcal{S}^\ast(S_4)) =
c(\mathcal{S}^\ast(S_5)) =  c(\mathcal{S}^\ast(S_6)) = c(\mathcal{S}^\ast(S_7)) = 2$.

\item If $n \geq 8$
then  $c(\mathcal{S}^\ast(S_n)) = \left\{\begin{array}{ll} 1 & n \not\in P \cup (P+1)\\ 2 & otherwise\end{array}\right.$.
\end{enumerate}
\end{thm}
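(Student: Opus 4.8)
The plan is to split off the small cases $2 \le n \le 7$, treated by direct inspection of the element orders, from the generic range $n \ge 8$, where the two structural lemmas of Bubboloni et al. do essentially all the work. For the small cases one lists $\pi_e(S_n)$ and records the divisibility relations among the element orders; since elements of equal order form a clique and two such cliques are joined precisely when one order divides the other, the components can be read off at once. Thus $S_2$ has a single non-identity vertex and $c = 1$; for $3 \le n \le 7$ the orders admitting no proper multiple inside $\pi_e(S_n)$ stay isolated (for $S_3$ these are both $2$ and $3$, while for $4 \le n \le 7$ a single prime, namely $3, 5, 5, 7$ respectively), and the remaining orders coalesce into one component, so in every case the total is $c = 2$. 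One should take care to include the mixed orders, e.g. the orders $10$ and $12$ occurring in $S_7$, when drawing these conclusions.

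For $n \ge 8$ I would first pin down $B_1(n) = P \cap \{n, n-1\}$. The decisive elementary remark is that $n$ and $n-1$ cannot both be prime once $n \ge 8$: they have opposite parity and the only even prime is $2$, so a prime neighbour $\ge 7$ forces the other integer to be composite. Hence $|B_1(n)| \le 1$, and exactly three mutually exclusive cases arise: $B_1(n) = \emptyset$ when $n \notin P \cup (P+1)$; $B_1(n) = \{n\}$ when $n \in P$ (then $n$ is odd and $n-1$ is even and $> 2$, hence composite); and $B_1(n) = \{n-1\}$ when $n \in P+1$ (then $n-1 \ge 7$ is an odd prime and $n$ is even, hence composite).

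Now I would count components using Lemmas \ref{lem25} and \ref{lem26}. By Lemma \ref{lem26} the set $\Sigma_n = \{g \in S_n : o(g) \notin B_1(n)\}$ is a single component. If $B_1(n) = \emptyset$ it contains every non-identity element, so $\mathcal{S}^\ast(S_n)$ is connected and $c = 1$; this is exactly the case $n \notin P \cup (P+1)$. If $B_1(n) = \{p\}$ with $p = n$ or $p = n-1$, then $n \in \{p, p+1\}$, so Lemma \ref{lem25} makes the elements of order $p$ a second component disjoint from $\Sigma_n$; as these two sets partition the non-identity elements, $c = 2$. This settles the case $n \in P \cup (P+1)$ and, at the same time, explains why $S_n$ can never have three components (unlike $A_n$): the obstruction $B_1(n)$ isolates at most one prime order.

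Given the two lemmas the argument is largely bookkeeping, so the only points needing genuine care are that the excised order-$p$ clique and the surviving set $\Sigma_n$ are both non-empty and disjoint — for $n \ge 8$ we have $p \ge 7$, so the transpositions lie in $\Sigma_n$ while the $p$-cycles lie in the order-$p$ clique, giving a clean partition — together with the accurate enumeration of $\pi_e(S_n)$ in the small cases. The main obstacle, such as it is, lies in arranging the parity analysis of $B_1(n)$ so that its three cases align precisely with the dichotomy $n \in P \cup (P+1)$ versus $n \notin P \cup (P+1)$; everything else follows directly from the cited component structure.
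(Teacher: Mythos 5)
Your argument is correct, but note that the paper gives no actual proof of this theorem: it merely declares the proof ``completely similar'' to that of Theorem B and Corollary C in Bubboloni et al.\ \cite{01} and omits it. What you have written is, in effect, the missing proof, carried out with exactly the two ingredients the paper records for this purpose, namely Lemma \ref{lem25} and Lemma \ref{lem26}. Your contribution beyond the bare citations is the elementary but decisive parity analysis: for $n\ge 8$ the integers $n$ and $n-1$ cannot both be prime, so $|B_1(n)|\le 1$, and the three cases $B_1(n)=\emptyset$, $B_1(n)=\{n\}$, $B_1(n)=\{n-1\}$ line up exactly with the dichotomy $n\notin P\cup(P+1)$ versus $n\in P\cup(P+1)$; this is also what explains why $c(\mathcal{S}^\ast(S_n))\le 2$ always holds, in contrast with $A_n$, where three components can occur. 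Your hand-checks for $2\le n\le 7$ are likewise correct, and your care with the mixed orders $10$ and $12$ in $S_7$ (which keep the order-$5$ clique attached to the main component, so that only the order-$7$ clique splits off) is precisely the kind of verification the omitted proof would have to contain. Two small points of hygiene: as defined in the paper, $\Sigma_n$ contains the identity (its order $1$ is not in $B_1(n)$), so when you invoke Lemma \ref{lem26} for the proper graph you should really speak of $\Sigma_n\setminus\{e\}$; and your explicit non-emptiness check in the case $B_1(n)=\{p\}$ (transpositions have order $2\ne p$ and lie in $\Sigma_n$, while $p\le n$ guarantees $p$-cycles exist) is genuinely needed to conclude that the count is exactly $2$ rather than at most $2$.
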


\begin{proof}
The proof is completely similar to the proof of \cite[Theorem B and Corollary C]{01} and so it is omitted. 
\end{proof}

\begin{cor} (See Bubboloni et al. \cite[Corollary C]{01})
If $n \geq 2$ then $\mathcal{S}^\ast(S_n)$ is $2-$connected if and only if $n = 2$ or $n \not\in P \cup (P + 1)$.
\end{cor}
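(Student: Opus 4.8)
Throughout I read $2$-connectedness in the standard nonseparability sense: a connected graph is $2$-connected precisely when it has no cut vertex, so in particular the one-vertex graph is vacuously $2$-connected (this is the reading under which the statement is correct, since $\mathcal{S}^\ast(S_2)$ is a single vertex). Under this convention the plan is to split the equivalence into two independent assertions and then combine them. First, I would show that $\mathcal{S}^\ast(S_n)$ has no cut vertex for every $n \geq 2$. Second, I would quote the preceding theorem to determine exactly when $\mathcal{S}^\ast(S_n)$ is connected. Granting both, $\mathcal{S}^\ast(S_n)$ is $2$-connected if and only if it is connected, and the corollary follows by matching the connectivity condition to the arithmetic condition $n = 2$ or $n \not\in P \cup (P+1)$.

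The connectivity half is immediate from the component count just proved: $c(\mathcal{S}^\ast(S_n)) = 1$ exactly when $n = 2$ or when $n \geq 8$ and $n \not\in P \cup (P+1)$. Since each of $3,5,7$ lies in $P$ and each of $4,6$ lies in $P+1$, every integer $n$ with $3 \leq n \leq 7$ belongs to $P \cup (P+1)$ and yields a disconnected graph; hence no connected instance is overlooked in the small range. Therefore $\mathcal{S}^\ast(S_n)$ is connected if and only if $n = 2$ or $n \not\in P \cup (P+1)$, which is precisely the condition in the statement.

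The substantive step is the absence of cut vertices, which I would establish by a twin-vertex argument. In $\mathcal{S}^\ast(S_n)$ adjacency depends only on the orders of the elements, and all elements of a fixed order form a clique; consequently any two permutations of the same order have identical closed neighbourhoods, i.e.\ they are twins. If each order $d \geq 2$ occurring in $S_n$ is realised by at least two elements, then every vertex has a twin, and deleting a vertex $u$ cannot raise the number of components: in any walk through $u$ one may replace each passage through $u$ by its twin $v$, which is adjacent to all former neighbours of $u$. For $n = 2$ the graph is a single vertex and has no cut vertex vacuously. For $n \geq 3$ the abundance hypothesis $\omega_d(S_n) \geq 2$ holds for every $d \geq 2$: when $d \geq 3$ an element $x$ of order $d$ satisfies $x \neq x^{-1}$ with $o(x^{-1}) = d$, and when $d = 2$ the group already contains the distinct transpositions $(1\,2)$ and $(1\,3)$. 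Thus $\mathcal{S}^\ast(S_n)$ is cut-vertex-free for all $n \geq 2$.

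I expect the twin-vertex step to be the main point requiring care. The delicate part is verifying that the rerouting genuinely leaves the component count unchanged even when $\mathcal{S}^\ast(S_n)$ is itself disconnected, so that ``no cut vertex'' is a property orthogonal to connectivity rather than a disguised restatement of it; one must check the degenerate cases of the walk replacement (for instance when a neighbour of $u$ is the twin $v$ itself, or when $u$ and $v$ lie in different spots of the walk). Once this is settled, the equivalence ``$2$-connected $\iff$ connected'' holds uniformly for the graphs $\mathcal{S}^\ast(S_n)$, and the corollary reduces to the component count recorded in the previous theorem.
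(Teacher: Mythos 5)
Your proposal is correct, but it takes a genuinely different route from the paper's. The paper offers no self-contained argument for this corollary: the ``only if'' direction is read off from the component count $c(\mathcal{S}^\ast(S_n))$ in the theorem immediately preceding it, while the ``if'' direction is delegated to Bubboloni et al.'s Corollary C for the power graph $\mathcal{P}^\ast(S_n)$ together with the remark, made in the paper right after the corollary, that the power graph is a spanning subgraph of the order supergraph, so $2$-connectedness transfers upward. You replace that imported (and substantially harder) power-graph result with an elementary structural observation specific to order supergraphs: adjacency in $\mathcal{S}^\ast(S_n)$ depends only on element orders, and $\omega_d(S_n) \geq 2$ for every $d \geq 2$ once $n \geq 3$ (via $x \neq x^{-1}$ for $d \geq 3$, and two distinct transpositions for $d = 2$), so every vertex has an adjacent true twin; a graph in which every vertex has such a twin has no cut vertex, hence for these graphs $2$-connectedness collapses to connectivity, and the corollary reduces to the component count. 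Your rerouting argument is sound, including the degenerate cases you flag (endpoints distinct from $u$, the twin $v$ possibly already lying on the walk, which at worst produces a walk rather than a path in $G - u$). Your route buys a stronger and more general fact --- $\mathcal{S}^\ast(G)$ has no cut vertex for any finite group $G$ in which every nontrivial element order is realized at least twice --- at the cost of being longer than the paper's one-line citation; the paper's route buys brevity but leans on the deep power-graph $2$-connectivity theorem. One caveat both treatments share: under the strict definition of $2$-connectivity (at least three vertices and $\kappa \geq 2$) the case $n = 2$ would fail, since $\mathcal{S}^\ast(S_2)$ is a single vertex; your explicit adoption of the nonseparability convention is precisely what makes the statement literally true and matches the convention implicit in the cited source.
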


Since the power graph of a group $G$ is a spanning subgraph of
the order supergraph of $G$, it is easy to see that
$2-$connectedness of the power graph implies the same property for
the order supergraph. Akbari and Ashrafi \cite{0}, conjectured
that the power graph of a non-abelian finite simple group $G$ is
$2-$connected if and only if $G \cong A_n$, where $n=3$ or
$n\not\in  P \cup (P+1) \cup (P+2) \cup 2P \cup (2P + 1)$.
 They verified this conjecture in some classes of simple groups. We now apply
the order supergraph to prove that the power graph of some other classes of simple groups are not $2-$connected.

\begin{thm}
The order supergraph of sporadic groups are not $2-$connected.
\end{thm}

\begin{proof}
It is easy to see that if we can partition $\pi_e(G)$ into
subsets $L(G)$ and $K(G)$ such that any element of $L(G)$ is
co-prime with any element of $K(G)$ then order supergraph of $G$ is
not $2-$connected. In Table 1, the set $L(G)$ for a sporadic group
$G$ is computed. By this table, one can see that there is no
sporadic simple group with $2-$connected order supergraph.
\end{proof}

\vskip 3mm

\begin{center}
\begin{tabular}{l|l|l}\hline
$L(M_{11} )=\{ 11,5\}$ & $L(M_{12} ) = \{ 11\}$ & $L(McL)=\{
11\}$\\  $L(M_{23}) = \{ 11,23\}$ & $L(M_{24} )= \{ 11,23\}$ &
$L(Co_{2}) = \{ 11,23\}$\\ $L(HS) = \{ 7,11\}$ &  $L(J_{2}) = \{
7\}$ & $L(Co_{1} )=\{ 23\}$\\ $L(Co_{2})=\{ 23\}$ & $L(Suz)=\{
11,13\}$ & $L(He)=\{ 17\}$\\
$L(HN)=\{ 19\}$ & $L(Th)=\{ 13,19,31\}$ & $L(Fi_{22} )=\{ 13\}$\\
$L(Fi_{23} )=\{ 17,23\}$ & $L(Fi^\prime_{24} ) = \{ 17,23,29\}$ &
$L(B) = \{ 31,47\}$\\ $L(M)=\{ 41,71,59\} $ & $L(J_{1} )=\{
7,11,19\} $ & $L(O^\prime N)=\{ 11,19,31\}$\\ $L(J_{3} )=\{
17,19\}$ & $L(Ru)=\{ 29\} $ & $L(J_{4} )=\{ 23,31,29,37,43\}$\\
$L(M_{22} )=\{ 5,7,11\} $ & $L(Ly)=\{ 31,37,67\} $ & \\ \hline
\end{tabular}
\vskip 3mm \centerline{{\bf Table 1.} The Set $L(G)$, for all
Sporadic Groups $G$.}
\end{center}

If two elements $x$ and $y$ in a finite group $G$ have co-prime order then obviously they
cannot be adjacent in $\mathcal{P}(G)$. In \cite{0}, the authors applied this simple
observation to prove that the power graph of some classes of simple groups are not connected.
The following result is an immediate consequence of \cite[Theorems 2-10]{0}.

\begin{thm}
The order supergraph of the following simple groups are not $2-$connected:

\begin{enumerate}
\item $^2F_4(q)$, where $q = 2^{2m+1}$ and $m \geq 1$;

\item $^2G_2(q)$, where $q = 3^{2m+1}$ and $m \geq 0$;

\item $A_1(q)$, $A_2(q)$, $B_2(q)$, $C_2(q)$ and $S_4(q)$, where $q$ is an odd prime power;

\item $F_4(2^m)$, $m \geq 1$ and $U_3(q)$, where $q$ is a prime power.
\end{enumerate}
\end{thm}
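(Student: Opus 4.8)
The plan is to reduce the whole statement to the coprimality criterion already used for the sporadic groups: if the set $\pi_e(G)\setminus\{1\}$ can be split into two non-empty parts $L(G)$ and $K(G)$ so that every element of $L(G)$ is coprime to every element of $K(G)$, then $\mathcal{S}(G)$ is not $2$-connected. Indeed, two elements whose orders are coprime are never adjacent in $\mathcal{S}(G)$ except through the identity, so in $\mathcal{S}^\ast(G)$ no vertex of order in $L(G)$ can reach a vertex of order in $K(G)$; hence $\mathcal{S}^\ast(G)$ is disconnected, the identity is a cut vertex of $\mathcal{S}(G)$, and $\mathcal{S}(G)$ is not $2$-connected. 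Thus the entire task is to produce such a coprime partition for each of the listed families.

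To organize the search, I would first recall the elementary fact that the spectrum of a finite group is compatible with the connected components of its prime graph: if $g$ has order $n$ then for any two primes $p,q$ dividing $n$ the power $g^{n/pq}$ has order $pq$, so all primes dividing $n$ lie in a single component. Consequently, as soon as the prime graph of $G$ has two or more components, grouping the non-identity element orders according to the component containing their prime divisors gives exactly the coprime partition demanded above. In this way the proof reduces to the observation that every group in the list is, under the stated restriction on $q$, a simple group with disconnected prime graph, which is precisely the arithmetic information assembled in \cite[Theorems 2-10]{0}.

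Concretely I would treat the families one at a time and read off from \cite{0} a prime $p$ (or a short list of primes) whose associated element orders are coprime to the orders of all remaining non-identity elements; placing these orders in $L(G)$ and the rest in $K(G)$ yields the partition. For the smallest case $A_1(q)=\mathrm{PSL}_2(q)$ with $q$ odd this is transparent: the non-identity element orders fall into the three pairwise coprime blocks of divisors of $p$, of $(q-1)/2$ and of $(q+1)/2$, so any one block against the union of the other two will do. The Ree families $^2G_2(q)$ and $^2F_4(q)$, together with $A_2(q)$, $B_2(q)=C_2(q)=S_4(q)$, $F_4(2^m)$ and $U_3(q)$, are handled in the same way once the corresponding coprime orders recorded in \cite{0} are substituted.

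The step I expect to carry the real weight --- and the reason the result is phrased as an immediate consequence of \cite[Theorems 2-10]{0} rather than proved in full --- is the family-by-family verification that the chosen prime genuinely isolates a coprime block of element orders. This needs the explicit spectra of these groups of Lie type, which is exactly what \cite{0} supplies. Once those coprime elements are in hand the passage costs nothing, since coprimality of orders forbids adjacency in $\mathcal{P}(G)$ and in $\mathcal{S}(G)$ simultaneously; this is also why the conclusion feeds back, through the spanning-subgraph remark, into the non-$2$-connectedness of the power graphs of these groups.
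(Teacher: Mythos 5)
Your proposal is correct and follows essentially the same route as the paper: the paper treats the theorem as an immediate consequence of \cite[Theorems 2-10]{0}, using exactly the coprimality-partition criterion (a split of $\pi_e(G)\setminus\{1\}$ into mutually coprime parts forces the identity to be a cut vertex) that it had just applied to the sporadic groups. Your elaboration via prime-graph components and the family-by-family reading of the spectra from \cite{0} is precisely the argument the paper leaves implicit.
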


In the end of this paper, we have to say that it remains an open question to classify all simple groups with $2-$connected order supergraph.  

\vskip 3mm

\noindent{\bf Acknowledgement.} The authors are indebted to the referee for his/her comments leaded us to correct some results and improve our arguments. This research is supported by INSF under grant number 93045458.

\vskip 3mm

\end{document}